\newtheorem{theorem}{Theorem}
\newtheorem{lemma}[theorem]{Lemma}
\newtheorem{claim}[theorem]{Claim}
\newtheorem{corollary}[theorem]{Corollary}
\newcommand{\N}{\mathbb{N}}
\title{A note on infinite versions of $(p,q)$-theorems\thanks{Both authors were supported by the ERC Advanced Grant ``ERMiD'' and by the Thematic Excellence Program TKP2021-NKTA-62 of the National Research, Development and Innovation Office. The first author was also supported by the NKFIH grants FK132060 and SNN135643. The second author was also supported by the J\'anos Bolyai Research Scholarship of the Hungarian Academy of Sciences and by \'UNKP-23-5 of NRDIO.}}
\author[1,2]{Attila Jung}
\author[1,2]{Dömötör Pálvölgyi} %domotor.palvolgyi@ttk.elte.hu
\affil[1]{ELTE Eötvös Loránd University, Budapest, Hungary}
\affil[2]{HUN-REN Alfréd Rényi Institute of Mathematics, Budapest, Hungary}
\begin{document}

\maketitle

\begin{abstract}
    We prove that fractional Helly and $(p,q)$-theorems imply $(\aleph_0,q)$-theorems in an entirely abstract setting.
    We give a plethora of applications, including reproving almost all earlier $(\aleph_0,q)$-theorems about geometric hypergraphs that were proved recently. % by others.
    Some of the corollaries are new results, for example, we prove that if $\mathcal{F}$ is an infinite family of convex compact sets in $\mathbb{R}^d$ and among every $\aleph_0$ of the sets some $d+1$ contain a point in their intersection with integer coordinates, then all the members of $\mathcal{F}$ can be hit with finitely many points with integer coordinates.
\end{abstract}

\section{Introduction}\label{sec:intro}

Let $\mathcal{K}_d$ be the hypergraph whose vertices are the compact convex sets in $\mathbb{R}^d$, and edges represent intersecting families of convex sets; note that the edges form a downwards closed set system. Many results of combinatorial convexity can be stated as properties of this hypergraph, called nerve complex in topology.

For a hypergraph $\mathcal{H}$, denote its vertex set by $V(\mathcal{H})$, the number of its edges by $e(\mathcal{H})$, and the $q$-uniform part, consisting of the edges that contain exactly $q$ vertices, by $\mathcal{H}^{(q)}$.
For an $S \subset V(\mathcal{H})$ vertex set, let $\mathcal{H}[S]$ be the subhypergraph spanned by $S$, which consists of the edges contained entirely in $S$.  %We will denote the number of edges by $e(\mathcal{H})$, and the $q$-uniform part of the hypergraph by $\mathcal{H}^{(q)}$.

According to the celebrated Alon-Kleitman $(p,q)$-theorem, for every $p\ge d+1$, for every family $\mathcal{F}$ of compact convex sets in $\mathbb{R}^d$, if among every $p$ members of $\mathcal{F}$ some $d+1$ are intersecting (i.e., they all have a point in common), then all the members of $\mathcal{F}$ can be hit by $C=C(p,d)$ points.
In our language, this can be stated as follows.

\begin{theorem}[Alon and Kleitman \cite{alon1992piercing}]\label{thm:pq}
For every finite $p \geq d+1$ there exists a $C<\infty$ with the property that if $S \subset V(\mathcal{K}_d)$ is such that $\mathcal{K}_d^{(d+1)}[S]$ does not contain independent sets of size $p$, then $S$ can be covered with $C$ edges of $\mathcal{K}_d$.
\end{theorem}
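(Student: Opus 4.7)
The plan is to follow the original three-step strategy of Alon and Kleitman. The first step is a Ramsey/Turán-type counting argument showing that the $(p,d+1)$-condition already forces a \emph{positive fraction} of $(d+1)$-tuples to be intersecting: if only a negligible fraction were intersecting, a standard averaging/supersaturation argument would produce $p$ sets among which no $d+1$ have a common point, contradicting the hypothesis.

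The second step is to invoke the Fractional Helly theorem of Katchalski and Liu, which upgrades ``a positive fraction of $(d+1)$-tuples intersect'' to ``a positive fraction of the sets share a common point''. This yields a single point that pierces at least $\beta(p,d)\cdot|\F|$ members of any finite subfamily, equivalently, an upper bound on the fractional piercing number $\tau^*(\F)$ in terms of $p$ and $d$ alone.

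The third, most delicate step is to convert this fractional bound into an actual integer piercing. Formulating the piercing problem as a covering linear program, its dual is a packing LP on the $(d+1)$-uniform intersection hypergraph, and the Fractional Helly estimate from the previous step supplies a fractional transversal of total weight bounded purely in terms of $p$ and $d$. Because the range space of compact convex sets in $\R^d$ has bounded VC-dimension (Radon's theorem bounds the shatter function), the Haussler--Welzl $\varepsilon$-net theorem then rounds this fractional transversal to an integer transversal of size $C(p,d)$. The main obstacle is precisely this rounding step: the LP-duality bookkeeping together with the constants produced by the $\varepsilon$-net machinery make $C(p,d)$ hard to control, and indeed the original Alon--Kleitman argument only yields an implicit bound, which is one reason why the abstract framework developed later in this note is useful in the $(\aleph_0,q)$-setting.
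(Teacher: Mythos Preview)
The paper does not give its own proof of this statement; Theorem~\ref{thm:pq} is quoted as the Alon--Kleitman theorem and used as a black box. So there is nothing in the paper to compare your argument against, and your outline is indeed a sketch of the original Alon--Kleitman proof strategy (positive density of intersecting $(d{+}1)$-tuples $\Rightarrow$ fractional Helly $\Rightarrow$ bounded $\tau^*$ $\Rightarrow$ bounded $\tau$).

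However, your third step contains a genuine error. You claim that ``the range space of compact convex sets in $\R^d$ has bounded VC-dimension (Radon's theorem bounds the shatter function)'' and then invoke the Haussler--Welzl $\varepsilon$-net theorem. This is false: the family of all convex sets in $\R^d$ has \emph{infinite} VC-dimension already for $d=2$, since any finite set of points in convex position is shattered by convex sets (for each subset take its convex hull). Radon's theorem bounds the VC-dimension of \emph{halfspaces}, not of arbitrary convex sets. Consequently the Haussler--Welzl theorem is not applicable, and this is precisely the obstacle that made the $(p,q)$-problem hard. The rounding from $\tau^*$ to $\tau$ in Alon--Kleitman is carried out via the \emph{weak} $\varepsilon$-net theorem for convex sets (Alon--B\'ar\'any--F\"uredi--Kleitman), which guarantees, for any probability measure $\mu$ on $\R^d$, a set of $f(\varepsilon,d)$ points meeting every convex set of $\mu$-measure at least $\varepsilon$; this result does not follow from VC-theory and requires a separate geometric argument (first selection lemma). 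Once you replace the Haussler--Welzl step by the weak $\varepsilon$-net theorem, the rest of your sketch goes through.
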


One of the main ingredients in its proof, the Katchalski-Liu fractional Helly theorem, can be phrased as follows.

\begin{theorem}[Katchalski and Liu \cite{katchalski1979problem}]\label{thm:fh}
    If $S \subset V(\mathcal{K}_d)$ is a finite subset and $e(\mathcal{K}_d^{(d+1)}[S]) \geq \alpha \binom{|S|}{d+1}$ for some $\alpha > 0$, then there exists an edge of $\mathcal{K}_d[S]$ of size $\beta |S|$ where $\beta=\beta(\alpha,d) > 0$ depends only on $\alpha$ and $d$.
\end{theorem}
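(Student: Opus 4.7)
The plan is to combine Helly's theorem with a topological counting argument via $d$-Leray complexes.

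First, I would reduce the problem to finding a combinatorially large subfamily all of whose $(d+1)$-subsets are intersecting. By Helly's theorem in $\mathbb{R}^d$, a finite family of compact convex sets has nonempty common intersection if and only if every $(d+1)$-subfamily does. Hence it suffices to find $T \subseteq S$ of size at least $\beta|S|$ such that every $(d+1)$-subset of $T$ is an edge of $\mathcal{K}_d^{(d+1)}$; then $T$ itself is automatically an edge of $\mathcal{K}_d$, which is the conclusion we want.

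Next, I would bring in the geometric input: $\mathcal{K}_d[S]$, the nerve complex of a finite family of compact convex sets in $\mathbb{R}^d$, is a \emph{$d$-Leray} simplicial complex, meaning every induced subcomplex has vanishing reduced homology in dimensions $\geq d$. This is Wegner's theorem, and it follows from the Nerve Theorem combined with Helly's theorem applied to all intersections of subfamilies. The reformulation in Step~1 now says: we must find a face of dimension at least $\beta|S|-1$ in a $d$-Leray complex that already has at least $\alpha\binom{|S|}{d+1}$ faces of dimension $d$.

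Finally, the statement would follow from Kalai's upper bound theorem for $d$-Leray complexes, which asserts that a $d$-Leray complex on $n$ vertices with at least $\alpha\binom{n}{d+1}$ $d$-dimensional faces contains a face of size at least $(1-(1-\alpha)^{1/(d+1)})n$. Setting $\beta(\alpha,d) = 1-(1-\alpha)^{1/(d+1)}$ finishes the argument.

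The main obstacle is Kalai's upper bound theorem itself: its proof uses algebraic shifting to reduce to a shifted $d$-Leray complex and then bounds the $f$-vector in terms of that of a union of $n-d$ $d$-simplices sharing a common facet, which is a nontrivial commutative-algebra input. If one wishes to avoid this machinery and accept a worse constant $\beta(\alpha,d)$, an elementary route is the original Katchalski--Liu induction on $d$: the base case $d=1$ exploits the perfectness of interval graphs (a dense interval graph has a large clique), and the inductive step uses a hyperplane-projection / separation argument to extract a $(d-1)$-dimensional instance and invoke the inductive hypothesis together with Helly's theorem in $\mathbb{R}^d$.
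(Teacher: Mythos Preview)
The paper does not prove Theorem~\ref{thm:fh}; it is quoted as a classical result of Katchalski and Liu and used as a black box. So there is no ``paper's own proof'' to compare against.

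Your outline is nonetheless a correct route to the result. The reduction in your first step is right: by Helly's theorem, finding $T\subseteq S$ with $|T|\ge\beta|S|$ all of whose $(d+1)$-subsets are intersecting is equivalent to finding an edge of $\mathcal{K}_d[S]$ of that size. Your second and third steps are the modern sharp proof: nerves of convex sets in $\mathbb{R}^d$ are $d$-Leray, and Kalai's upper bound theorem for $d$-Leray complexes then yields a face of size at least $(1-(1-\alpha)^{1/(d+1)})|S|$, which is in fact the optimal value of $\beta(\alpha,d)$. You correctly flag that this step hides nontrivial machinery (algebraic shifting), and your fallback---the original Katchalski--Liu induction on $d$ with a weaker $\beta$---is also a legitimate proof. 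Either path would be acceptable as a proof of the stated theorem; the paper simply takes the result for granted.
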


More generally, we say that a $q$-uniform (infinite)\footnote{Note that the fractional Helly property is true for all finite hypergraph, as we can choose a small enough $\beta$. Instead, we could define the fractional Helly property for a family of finite hypergraphs, but it is easier to deal only with one infinite hypergraph.} hypergraph $\mathcal{H}$ satisfies the fractional Helly property, if for all $\alpha>0$ there exists a $\beta>0$ such that if $e(\mathcal{H}[S]) \geq \alpha \binom{|S|}{q}$ for some finite $S \subset V(\mathcal{H})$, then there exists a ($q$-uniform) clique of $\mathcal{H}[S]$ of size  $\beta |S|$. Because of Helly's theorem, Theorem~\ref{thm:fh} is equivalent to that $\mathcal{K}_d^{(d+1)}$ satisfies the fractional Helly property.\\

For $0 \leq k < d$, let $\mathcal{B}_{d,k}$ be the hypergraph whose vertices are compact balls from $\mathbb{R}^d$, and edges represent families of balls which can be intersected with a single $k$-flat ($k$-dimensional affine subspace). Keller and Perles proved an infinite variant of the Alon-Kleitman theorem for $k$-flats intersecting Euclidean balls, that states that if we are given a collection of closed balls $S$ such that among any infinite subcollection of $S$ there are $k+2$ that can be intersected with a single $k$-flat, then there are finitely many $k$-flats that stab all balls of $S$. In our language, this can be stated as follows.

\begin{theorem}[Keller and Perles \cite{keller2022aleph_0}]\label{thm:kellerperles}
    If $S \subset V(\mathcal{B}_{d,k})$ is such that $\mathcal{B}_{d,k}^{(k+2)}[S]$ has no infinitely large independent set, then $S$ can be covered with a finite number of edges of $\mathcal{B}_{d,k}$.
\end{theorem}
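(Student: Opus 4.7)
The plan is to deduce Theorem~\ref{thm:kellerperles} directly from the abstract $(\aleph_0,q)$-theorem announced in the abstract of the paper. That meta-result takes as hypotheses (a) the fractional Helly property for a $q$-uniform hypergraph $\mathcal{H}$ and (b) the finite $(p,q)$-theorem for $\mathcal{H}$ for every $p \geq q$, and outputs the corresponding $(\aleph_0,q)$-theorem. Applying it with $q = k+2$ and $\mathcal{H} = \mathcal{B}_{d,k}^{(k+2)}$ yields exactly Theorem~\ref{thm:kellerperles}. So the task reduces to verifying (a) and (b) in the finite setting for balls and $k$-transversals.

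For (a), I would show that for every $\alpha > 0$ there exists $\beta > 0$ such that whenever a finite family of closed balls in $\R^d$ has at least an $\alpha$-fraction of its $(k+2)$-element subfamilies admitting a common $k$-transversal, some $\beta$-fraction of the balls share a common $k$-transversal. The crucial geometric input is that $k+2$ is the Helly number for $k$-transversals of balls (a classical fact, which is what motivates the choice of uniformity). From here a colored-Helly and first-selection style argument in the spirit of Katchalski--Liu can promote the dense $(k+2)$-hypergraph to a linear-size subfamily sharing a common $k$-transversal.

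For (b), I would push (a) through the standard Alon--Kleitman machinery: weak $\varepsilon$-nets for $k$-transversals to balls, together with LP duality and iteration, produce a piercing bound $C = C(p,d,k)$ depending only on $p$, $d$, and $k$. With (a) and (b) in hand, the abstract $(\aleph_0,q)$-theorem of the paper applies directly to $\mathcal{B}_{d,k}^{(k+2)}$ and finishes the proof.

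The main obstacle is (a): fractional Helly is the non-trivial ingredient, and it is essential that the objects are balls rather than arbitrary convex bodies, since analogous fractional Helly statements for $k$-transversals fail for general convex sets as soon as $1 \leq k \leq d-1$. What makes the argument go through is the rotational symmetry of balls together with the bounded Helly number $k+2$ for their $k$-transversals; everything else is then a combinatorial transfer using the abstract result proved earlier in this paper.
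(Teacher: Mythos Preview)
Your high-level strategy is exactly the one the paper uses: it cites \cite{jung2024fat} for precisely the fractional Helly theorem and the $(p,k+2)$-theorem for $k$-transversals of balls, and then applies Theorem~\ref{thm:main}. So the reduction you propose is correct and matches the paper.

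Where your proposal goes wrong is in the justification of (a). Your claimed ``crucial geometric input'' --- that $k+2$ is the Helly number for $k$-transversals of balls, and that this is a classical fact --- is false. Already for $k=0$ the Helly number for balls in $\R^d$ is $d+1$, not $2$; and for $k\ge 1$ there is no exact Helly theorem for $k$-transversals of balls with bound $k+2$. What is true is that the \emph{fractional} Helly number is $k+2$, but this is neither classical nor a consequence of an exact Helly number via a Katchalski--Liu argument: it is a recent and non-trivial result of \cite{jung2024fat} (by the same authors), relying on geometric arguments specific to balls. Your sketch would therefore not succeed as written; you would need to invoke the results of \cite{jung2024fat} as black boxes, just as the paper does. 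The remark that ``rotational symmetry of balls'' is what makes things work is morally right, but the actual proof is considerably more involved than a first-selection argument.
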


Theorem \ref{thm:kellerperles} was proved for $k=0$ for general balls, and for $k > 0$ for only unit balls in the first version of \cite{keller2022aleph_0} that appeared in SoCG 2022, and later for all $0 \leq k < d$ for general radius balls that can be found in their arXiv preprint. We prove that such an infinite variant of the Alon-Kleitman theorem always follows from the corresponding finite version and a fractional Helly theorem. In fact, we prove that if our hypergraph satisfies the fractional Helly property, then the condition of the infinite variant of the Alon-Kleitman theorem implies the condition of the finite version with some finite $p$. We state this in the contrapositive form as follows.

\begin{theorem}\label{thm:main}
    If a $q$-uniform hypergraph satisfies the fractional Helly property and has arbitrarily large finite independent sets, then it has an infinitely large independent set.
\end{theorem}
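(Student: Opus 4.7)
I prove the contrapositive: assume $\mathcal{H}$ has fractional Helly and no infinite independent set; I show that $\mathcal{H}$'s finite independent sets are bounded. Since $V(\mathcal{H})$ is infinite (otherwise the statement is vacuous) and contains no infinite independent set, the infinite Ramsey theorem for $q$-uniform hypergraphs implies that every infinite subset of $V(\mathcal{H})$ contains an infinite clique, i.e.\ a set all of whose $q$-subsets are edges. Iterating this, and using that any independent set meets each clique in at most $q-1$ vertices, I extract pairwise disjoint infinite cliques $K_1, K_2, \ldots \subseteq V(\mathcal{H})$ which cover $V(\mathcal{H})$ up to a finite remainder (otherwise the leftover would yield yet another infinite clique).

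Next, I apply the partite version of infinite Ramsey to refine each $K_i$ to an infinite subclique $K_i'$ such that for every $q$-tuple of indices $i_1 < \cdots < i_q$, either every transversal $q$-tuple from $K_{i_1}' \times \cdots \times K_{i_q}'$ is an edge of $\mathcal{H}$ or none is. Let $\mathcal{G}$ be the $q$-uniform quotient hypergraph on $\N$ whose edges record the ``all-edge'' case. Independent sets of $\mathcal{G}$ lift via any transversal to independent sets of $\mathcal{H}$ of the same size, so $\mathcal{G}$ has no infinite independent set; conversely, any independent set of $\mathcal{H}$ contains at most one vertex per $K_i'$ and a bounded remainder outside $\bigcup K_i'$, so it projects to a $\mathcal{G}$-independent set of comparable size. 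A blowup argument---replacing $i \in I \subseteq \N$ by a finite $A_i \subseteq K_i'$ of common size $m$ and applying fractional Helly of $\mathcal{H}$ on $S = \bigcup_{i \in I} A_i$---transfers fractional Helly from $\mathcal{H}$ to $\mathcal{G}$ with the same constants: a dense $\mathcal{G}[I]$ yields a dense $\mathcal{H}[S]$ (each $\mathcal{G}$-edge produces $m^q$ transversal $\mathcal{H}$-edges), whose fractional-Helly clique of size $\beta|S|=\beta m|I|$ has an index footprint of size $\geq \beta|I|$ that, by the partite uniformity, is a clique in $\mathcal{G}[I]$.

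The problem is thus reduced to showing that a $q$-uniform hypergraph $\mathcal{G}$ on $\N$ with fractional Helly and no infinite independent set has bounded finite independent sets. I would close the loop by extracting infinitely many pairwise disjoint $\mathcal{G}$-independent sets $J_1, J_2, \ldots$ of increasing sizes (possible since removing finitely many vertices preserves ``arbitrarily large finite independent sets''), applying infinite Ramsey to $U = \bigcup_j J_j$ to get an infinite clique $K \subseteq U$ which, since each $J_j$ is independent, meets each $J_j$ in at most $q-1$ vertices and therefore spreads over infinitely many $J_j$'s. On a finite union of $L$ of the intersected $J_j$'s the clique number is at most $(q-1)L$, whereas the $\geq L$ vertices of $K$ it contains guarantee positive edge density, so fractional Helly applied at the right scale forces a clique of linear size in $L$, contradicting the $(q-1)L$ bound. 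The main obstacle lies in this last step: the density lower bound from $K$ depends on the sizes of the $J_j$'s and may weaken as those sizes grow, so the cleanest execution seems to require iterating the partite Ramsey refinement---or exploiting a more subtle interplay between $K$ and the $J_j$'s---to boost the density to a scale independent of $N$ before invoking fractional Helly. This balancing of Ramsey, the disjoint-independent-set structure, and fractional Helly is the heart of the argument.
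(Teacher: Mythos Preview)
Your proposal is incomplete, and the ``main obstacle'' you flag at the end is the entire difficulty; nothing before it helps. The long passage through infinite cliques $K_i$, the partite refinement $K_i'$, and the quotient $\mathcal{G}$ is circular: you explicitly reduce the statement for $\mathcal{H}$ to the \emph{identical} statement for $\mathcal{G}$, which is just another $q$-uniform hypergraph with fractional Helly and no infinite independent set. No structure has been gained. (Minor slip: an independent set meets each clique $K_i'$ in up to $q-1$ vertices, not one.) In your closing attempt you take disjoint independent sets $J_1,J_2,\dots$ of growing sizes and try to contradict fractional Helly via a Ramsey clique $K\subseteq\bigcup_j J_j$; but the edge density you obtain on $\bigcup_{j\le L}J_j$ is only of order $\bigl(L/\sum_{j\le L}|J_j|\bigr)^q\to 0$, so fractional Helly yields nothing. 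This is not a balancing issue that a smarter choice of $K$ repairs: a single Ramsey clique simply cannot supply uniform positive density across parts of unbounded size.

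The paper resolves this by using fractional Helly \emph{once, qualitatively}, and then abandoning density arguments. From fractional Helly one deduces that $\mathcal{H}$ is $M_q^{(q)}(t)$-free for some fixed $t$ (with $\alpha=q!/q^q$, any induced $M_q^{(q)}(t)$ would be $\alpha$-dense yet have clique number at most $(q-1)q$). Then one works directly with the independent sets rather than with cliques: take pairwise disjoint independent sets $V_1,V_2,\dots$ of rapidly growing sizes and apply a stronger partite Ramsey lemma to pass to subsets $V_i'$ of size exactly $t$ that are \emph{$1$-homogeneous}, meaning that for each single vertex $v$ either every increasing $q$-tuple with first coordinate $v$ is an edge or none is. If some $V_{i_0}'$ consisted entirely of ``all-edge'' vertices, then $V_{i_0}'$ together with any $q-1$ later parts would induce a member of $M_q^{(q)}(t)$, which is forbidden; hence every $V_i'$ contains a ``no-edge'' vertex $v_i$, and $\{v_i:i\in\N\}$ is the desired infinite independent set. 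The two ideas you are missing are (i) converting fractional Helly into the finite combinatorial obstruction $M_q^{(q)}(t)$-freeness, and (ii) homogenizing the independent sets down to a \emph{bounded} common size $t$ rather than letting them grow, so that this obstruction can actually be applied.
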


Theorem~\ref{thm:main} is proved in Section~\ref{sec:proof}. In the rest of the introduction, we list a few corollaries of it, all analogs of the result of Keller and Perles, and in Section~\ref{subsec:colorful} we discuss colorful extensions.\\

Combining Theorem~\ref{thm:main} with Theorems~\ref{thm:pq} and \ref{thm:fh}, we get the following result.

\begin{corollary}\label{cor:alephMeetsClassical}
    Let $\mathcal{F}$ be a family of compact convex sets in $\mathbb{R}^d$. If among every $\aleph_0$ members of $\mathcal{F}$ some $d+1$ are intersecting, then all the members of $\mathcal{F}$ can be hit by finitely many points.
\end{corollary}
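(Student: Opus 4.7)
The plan is to use Theorem~\ref{thm:main} as the bridge that converts the infinitary hypothesis of the corollary into the finitary hypothesis required by the Alon--Kleitman $(p,q)$-theorem. First I would identify $\mathcal{F}$ with a subset $S \subseteq V(\mathcal{K}_d)$ and consider the induced $(d+1)$-uniform hypergraph $\mathcal{H} := \mathcal{K}_d^{(d+1)}[S]$. The assumption that every countably infinite subfamily of $\mathcal{F}$ contains some $d+1$ intersecting members is, by unwinding definitions, precisely the statement that $\mathcal{H}$ admits no infinite independent set.

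Next I would verify that Theorem~\ref{thm:main} applies to $\mathcal{H}$. By Theorem~\ref{thm:fh}, the ambient hypergraph $\mathcal{K}_d^{(d+1)}$ satisfies the fractional Helly property, and this property transfers to any induced subhypergraph: any finite $T \subseteq S$ is also a finite subset of $V(\mathcal{K}_d)$, and the relevant edge counts, clique sizes, and vertex counts are identical whether computed inside $\mathcal{H}$ or inside $\mathcal{K}_d^{(d+1)}$. Hence the contrapositive of Theorem~\ref{thm:main} furnishes a finite $p$ such that $\mathcal{H}$ has no independent set of size $p$, and we may enlarge $p$ if necessary so that $p \geq d+1$.

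Finally I would invoke Theorem~\ref{thm:pq} with this $p$: it produces a constant $C = C(p,d)$ such that $S$ is covered by $C$ edges of $\mathcal{K}_d$, which is precisely the assertion that $\mathcal{F}$ is pierced by finitely many points. The argument is essentially a three-line syllogism, with Theorem~\ref{thm:main} performing all the real work; the main obstacle in the overall paper sits there, not in this corollary. The only delicate point here is the (routine) observation that fractional Helly is hereditary under restriction to an induced subhypergraph, which is immediate from how the property is phrased in terms of finite subsets of the vertex set.
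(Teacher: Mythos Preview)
Your proposal is correct and follows essentially the same route as the paper's own proof: translate the hypothesis to ``$\mathcal{K}_d^{(d+1)}[S]$ has no infinite independent set,'' apply Theorem~\ref{thm:fh} to get the fractional Helly property, use Theorem~\ref{thm:main} in contrapositive form to obtain a finite $p$, and finish with Theorem~\ref{thm:pq}. Your extra care about the hereditary nature of the fractional Helly property and about ensuring $p \ge d+1$ are minor clarifications the paper leaves implicit.
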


In our language, this can be stated as follows.
    If $S \subset V(\mathcal{K}_d)$ is such that $\mathcal{K}_d^{(d+1)}[S]$ has no infinitely large independent set, then $S$ can be covered with a finite number of edges of $\mathcal{K}_d$.

\begin{proof}[Proof of Corollary~\ref{cor:alephMeetsClassical}]
    By Theorem~\ref{thm:fh}, $\mathcal{K}_d$ satisfies the fractional Helly property. As $\mathcal{K}_d^{(d+1)}[S]$ has no infinitely large independent set, there exists some finite $p$ such that $\mathcal{K}_d^{(d+1)}[S]$ has no independent set of size $p$ by Theorem~\ref{thm:main}. We can use Theorem~\ref{thm:pq} to conclude that $S$ can be covered with $C(p,d)$ edges of $\mathcal{K}_d$.
\end{proof}

Similarly, the fractional Helly and $(p,q)$-theorems about hyperplanes intersecting convex sets by Alon and Kalai \cite{alon1995bounding} imply the following infinite variant.

\begin{corollary}\label{cor:AlonKalai}
    Let $\mathcal{F}$ be a family of compact convex sets in $\mathbb{R}^d$. If among every $\aleph_0$ members of $\mathcal{F}$ some $d+1$ can be hit by a hyperplane, then all the members of $\mathcal{F}$ can be hit by finitely many hyperplanes.
\end{corollary}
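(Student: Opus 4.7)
The plan is to mimic the proof of Corollary~\ref{cor:alephMeetsClassical} verbatim, merely swapping the geometric incidence relation ``contains a common point'' for the relation ``is stabbed by a common hyperplane''. Concretely, I would let $\mathcal{T}_d$ denote the hypergraph whose vertices are the compact convex sets in $\R^d$ and whose edges are the subfamilies admitting a common hyperplane transversal; this edge set is downward closed, so $\mathcal{T}_d$ is a well-defined hypergraph to which the framework of Theorem~\ref{thm:main} applies. The hypothesis of the corollary is then exactly that, writing $S=\mathcal{F}\subset V(\mathcal{T}_d)$, the $(d+1)$-uniform hypergraph $\mathcal{T}_d^{(d+1)}[S]$ has no infinite independent set.

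Next I would invoke the results of Alon and Kalai~\cite{alon1995bounding} as the two required black boxes. Their fractional Helly theorem for hyperplane transversals states that if an $\alpha$-fraction of the $(d+1)$-tuples of some finite family of compact convex sets in $\R^d$ admit a common hyperplane transversal, then a $\beta(\alpha,d)$-fraction of the whole family admits a common hyperplane transversal; phrased in the language of this paper, this is precisely the statement that $\mathcal{T}_d^{(d+1)}$ satisfies the fractional Helly property. Their $(p,q)$-theorem for hyperplanes states that for every $p\ge d+1$ there is a constant $C'=C'(p,d)$ such that any finite family of compact convex sets in $\R^d$ in which every $p$ members contain some $d+1$ stabbed by a common hyperplane can be pierced by $C'$ hyperplanes.

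With these two ingredients in place, the argument is a mechanical rerun of the proof of Corollary~\ref{cor:alephMeetsClassical}. Since $\mathcal{T}_d^{(d+1)}$ satisfies the fractional Helly property and $\mathcal{T}_d^{(d+1)}[S]$ has no infinite independent set, Theorem~\ref{thm:main} furnishes a finite $p$ such that $\mathcal{T}_d^{(d+1)}[S]$ has no independent set of size $p$. Applying the Alon--Kalai $(p,q)$-theorem to any finite subfamily of $S$ and taking the supremum, or applying it directly in its infinitary formulation, yields a covering of $S$ by at most $C'(p,d)$ edges of $\mathcal{T}_d$, i.e.\ a piercing of $\mathcal{F}$ by finitely many hyperplanes.

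There is no genuine obstacle: the only thing one has to be slightly careful about is that Alon and Kalai's $(p,q)$-theorem is stated for finite families, so one should either observe that their bound depends only on $p$ and $d$ and hence transfers to infinite families by a routine compactness or direct argument (exactly as in the classical Alon--Kleitman case), or equivalently appeal to Theorem~\ref{thm:main} via the contrapositive as in the proof of Corollary~\ref{cor:alephMeetsClassical}. The conceptual content is entirely contained in Theorem~\ref{thm:main}, and the corollary is obtained by plugging in the appropriate pair of theorems of Alon and Kalai.
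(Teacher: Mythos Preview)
Your proposal is correct and matches the paper's approach exactly: the paper does not spell out a separate proof but simply says ``Similarly, the fractional Helly and $(p,q)$-theorems about hyperplanes intersecting convex sets by Alon and Kalai imply the following infinite variant,'' i.e., rerun the proof of Corollary~\ref{cor:alephMeetsClassical} with the Alon--Kalai black boxes in place of Theorems~\ref{thm:pq} and~\ref{thm:fh}. Your remark about passing from finite families to the infinite family is a fair caveat but, as you note, is handled just as in the classical case.
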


The fractional Helly and $(p,d+1)$-theorems about convex lattice sets by Bárány and Matou{\v s}ek imply the following infinite variant.

\begin{corollary}\label{cor:lattice}
    Let $\mathcal{F}$ be a family of compact convex sets in $\mathbb{R}^d$. If among every $\aleph_0$ member of $\mathcal{F}$ some $d+1$ contain a point in their intersection with integer coordinates, then all the members of $\mathcal{F}$ can be hit by finitely many points with integer coordinates.
\end{corollary}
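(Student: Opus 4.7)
The plan is to follow the template of the proof of Corollary~\ref{cor:alephMeetsClassical}, replacing the nerve of intersecting compact convex sets by its lattice-point analogue. I would let $\mathcal{L}_d$ be the hypergraph whose vertices are the compact convex sets in $\R^d$ and whose edges are the finite subfamilies sharing a common point of $\mathbb{Z}^d$. This edge family is downwards closed, so the conclusion ``$\F$ can be hit by finitely many integer points'' is the same as ``$\F$ can be covered by finitely many edges of $\mathcal{L}_d$'', and the hypothesis ``among every $\aleph_0$ members of $\F$ some $d+1$ share a lattice point'' is the same as ``$\mathcal{L}_d^{(d+1)}[\F]$ has no infinite independent set''.

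The next step is to assemble the two ingredients required by the abstract reduction. Here I would invoke the fractional Helly theorem of Bárány and Matoušek for convex lattice sets to see that $\mathcal{L}_d^{(d+1)}$ satisfies the fractional Helly property in the sense defined earlier in the paper, and their $(p,d+1)$-theorem to see that whenever $\mathcal{L}_d^{(d+1)}[S]$ has no independent set of a given finite size $p$, the set $S$ can be pierced by $C=C(p,d)$ integer points, i.e.\ covered by $C$ edges of $\mathcal{L}_d$.

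With these in hand, the argument is identical to the proof of Corollary~\ref{cor:alephMeetsClassical}. Theorem~\ref{thm:main}, applied to $\mathcal{L}_d^{(d+1)}$, upgrades the absence of infinite independent sets in $\mathcal{L}_d^{(d+1)}[\F]$ to the absence of independent sets of some finite size $p$; the Bárány-Matoušek $(p,d+1)$-theorem then delivers a finite integer-point piercing set for $\F$.

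The only work beyond the abstract reduction is bookkeeping. One has to check that the Bárány-Matoušek fractional Helly and $(p,d+1)$-theorems, typically phrased for convex lattice sets, translate cleanly to families of compact convex sets in $\R^d$ by passing from each $C \in \F$ to $C \cap \mathbb{Z}^d$; since a subfamily of $\F$ has a common integer point exactly when the associated lattice sets have nonempty intersection, both the hypothesis and the conclusion are preserved, and the corollary follows. No new convex-geometric input is needed, so this is the easiest step rather than a real obstacle.
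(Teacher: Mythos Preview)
Your proposal is correct and is exactly the approach the paper indicates: the paper does not spell out a separate proof for Corollary~\ref{cor:lattice}, but simply says it follows from the fractional Helly and $(p,d+1)$-theorems of B\'ar\'any and Matou\v{s}ek together with Theorem~\ref{thm:main}, which is precisely the template you carry out. Your remark that the B\'ar\'any--Matou\v{s}ek results, stated for convex lattice sets, transfer to compact convex sets via $C\mapsto C\cap\mathbb{Z}^d$ is the only extra bookkeeping needed, and it is correct.
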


A family of sets in $\mathbb{R}^d$ is said to have bounded description complexity, if each of the sets is definable with a bounded number of polynomial inequalities of bounded degree. Matou{\v s}ek proved fractional Helly and $(p, (d-k)(k+1)+1)$-theorems for $k$-flats intersecting $d$-dimensional sets of bounded description complexity. Theorem~\ref{thm:main} implies the following corollary.

\begin{corollary}\label{cor:boundedComplexity}
    Let $\mathcal{F}$ be a family of sets in $\mathbb{R}^d$ of bounded description complexity. If among every $\aleph_0$ member of $\mathcal{F}$ some $(d-k)(k+1)+1$ can be hit with a single $k$-dimensional affine subspace, then all the members of $\mathcal{F}$ can be hit by finitely many $k$-dimensional affine subspaces.
\end{corollary}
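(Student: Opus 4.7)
The plan is to follow the template used for Corollary~\ref{cor:alephMeetsClassical}, swapping in Matou\v{s}ek's results for bounded description complexity sets in place of the classical Helly-type theorems for convex sets.

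First, I would introduce the appropriate hypergraph $\mathcal{T}_{d,k}$: its vertices are the sets in $\mathbb{R}^d$ of bounded description complexity, and a subfamily spans an edge exactly when some single $k$-flat meets every member. Writing $q := (d-k)(k+1)+1$, the hypothesis of the corollary says precisely that $\mathcal{T}_{d,k}^{(q)}[\mathcal{F}]$ has no infinitely large independent set.

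Next, I would invoke the two Matou\v{s}ek theorems referenced in the statement. His fractional Helly theorem for $k$-flat transversals of bounded description complexity sets says exactly that $\mathcal{T}_{d,k}^{(q)}$ satisfies the fractional Helly property in the abstract sense defined just after Theorem~\ref{thm:fh}. Since fractional Helly trivially restricts to any induced subhypergraph, the same holds for $\mathcal{T}_{d,k}^{(q)}[\mathcal{F}]$. Applying Theorem~\ref{thm:main} in its contrapositive form, the absence of an infinite independent set forces the existence of a finite $p$ such that $\mathcal{T}_{d,k}^{(q)}[\mathcal{F}]$ has no independent set of size $p$ either.

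Finally, feeding this finite $p$ into Matou\v{s}ek's $(p,q)$-theorem for $k$-transversals produces a constant $C=C(p,d,k)$ and a collection of $C$ many $k$-flats that together meet every member of $\mathcal{F}$, which is exactly what the corollary asserts. The only step that is not entirely mechanical is checking that Matou\v{s}ek's geometric results really do satisfy the axiomatic fractional Helly and $(p,q)$ properties with matching uniformity $q=(d-k)(k+1)+1$, but this amounts to reading off his theorems rather than performing any new argument, so I do not anticipate a genuine obstacle.
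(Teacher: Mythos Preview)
Your proposal is correct and matches the paper's intended argument exactly: the paper does not even spell out a separate proof for this corollary, merely noting that Matou\v{s}ek's fractional Helly and $(p,(d-k)(k+1)+1)$-theorems combined with Theorem~\ref{thm:main} yield it, which is precisely the template from Corollary~\ref{cor:alephMeetsClassical} that you are invoking.
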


Chakraborty, Ghosh and Nandi \cite{chakraborty2023stabbing} recently proved an $(\aleph_0, 2)$-theorem for axis-parallel boxes and axis-parallel $k$-flats, which we state later as Theorem \ref{thm:boxesFlats}. The below $k=d-1$ case can be easily proved with our method, and was already stated earlier in the previous version of our manuscript.

\begin{corollary}\label{cor:chakraborty}
    Let $\mathcal{F}$ be a family of axis-parallel boxes in $\mathbb{R}^d$. If among every $\aleph_0$ of them some $2$ can be intersected with an axis-parallel hyperplane, then all the members of $\mathcal{F}$ can be hit by finitely many hyperplanes.
\end{corollary}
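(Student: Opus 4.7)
The plan is to apply Theorem~\ref{thm:main} to the hypergraph $\mathcal{H}$ whose vertices are the axis-parallel boxes in $\mathbb{R}^d$ and whose edges are the subfamilies pierceable by a single axis-parallel hyperplane, in direct analogy with the proof of Corollary~\ref{cor:alephMeetsClassical}. Two ingredients are required: a fractional Helly property for the $2$-uniform part $\mathcal{H}^{(2)}$, and a finite $(p,2)$-theorem for $\mathcal{H}$. Once both are in hand, the conclusion follows: the hypothesis says $\mathcal{H}^{(2)}[\mathcal{F}]$ has no infinitely large independent set, Theorem~\ref{thm:main} upgrades this to a finite bound $p$ on the size of independent sets, and the finite $(p,2)$-theorem then hits all of $\mathcal{F}$ with at most $C(p,d)$ axis-parallel hyperplanes.

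The first ingredient reduces to the one-dimensional case via pigeonhole over coordinate axes. Two boxes $B,B'$ are hittable by an axis-parallel hyperplane iff their projections to some coordinate axis $i$ overlap, so $\mathcal{H}^{(2)}$ is the edge union of $d$ interval intersection graphs $G_1,\ldots,G_d$ (one per axis). If $S\subset V(\mathcal{H})$ is finite with $e(\mathcal{H}^{(2)}[S]) \ge \alpha\binom{|S|}{2}$, then some axis $i$ satisfies $e(G_i[S]) \ge (\alpha/d)\binom{|S|}{2}$. Since the axis-$i$ projections are closed intervals on the line, the one-dimensional instance of Theorem~\ref{thm:fh} (Katchalski--Liu on $\mathbb{R}$) supplies a point contained in a $\beta(\alpha/d,1)\cdot|S|$ fraction of them, i.e., a single axis-$i$ hyperplane hits a linear-sized clique in $\mathcal{H}[S]$, which gives fractional Helly for $\mathcal{H}^{(2)}$.

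The main obstacle will be the finite $(p,2)$-theorem. My plan is to obtain it through the Alon--Kleitman/Matou\v{s}ek framework: axis-parallel boxes have bounded semialgebraic description complexity, and axis-parallel hyperplanes form a range space of bounded dual VC-dimension on them, so the fractional Helly established above combines with weak $\varepsilon$-net bounds to yield a $(p,2)$-theorem via the standard LP-duality argument. As a complementary route, the finite version can be extracted from the work of Chakraborty, Ghosh and Nandi~\cite{chakraborty2023stabbing} whose infinite result (later stated as Theorem~\ref{thm:boxesFlats}) rests on combinatorial tools that also deliver the finite version. With this ingredient settled, the remainder of the argument is formally identical to the proof of Corollary~\ref{cor:alephMeetsClassical}: apply Theorem~\ref{thm:main} to get a finite $p$, then invoke the $(p,2)$-theorem.
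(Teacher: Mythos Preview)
Your proposal is correct and follows essentially the same approach the paper outlines for this corollary: establish a fractional Helly property and a finite $(p,2)$-theorem for the hypergraph $\mathcal{A}_{d,d-1}$, then apply Theorem~\ref{thm:main}. The paper explicitly says these two ingredients ``follow in a fairly standard way from known methods, which we omit here,'' deferring instead to the more general Theorem~\ref{thm:boxesFlats}; your pigeonhole-over-axes reduction to one-dimensional fractional Helly is exactly such a standard method, and your routes to the $(p,2)$-theorem (Alon--Kleitman machinery via weak $\varepsilon$-nets, or extracting it from \cite{chakraborty2023stabbing}) are consistent with what the paper later proves as Claim~\ref{cl:pqforBoxes} via Ding--Seymour--Winkler.
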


For the proof, we only need suitable versions of the fractional Helly property and a $(p,2)$-theorem for boxes and hyperplanes to be able to use Theorem~\ref{thm:main}.
These suitable versions follow in a fairly standard way from known methods, which we omit here, because later we give a full proof of the more general Theorem \ref{thm:boxesFlats}.
For $k<d-1$, a slightly stronger form of Theorem~\ref{thm:main} (Theorem~\ref{thm:mainGeneralized}) and some claims about finite families of boxes will be needed, which we discuss in Section~\ref{sec:boxes}.\\

Using the fractional Helly and $(p,k+2)$-Theorems about $k$-flats intersecting Euclidean balls proved in \cite{jung2024fat}, Theorem~\ref{thm:main} also provides an alternative proof of Theorem~\ref{thm:kellerperles} of Keller and Perles~\cite{keller2022aleph_0}, which initiated this whole line of research.\\

We could give a long list of other corollaries of Theorem~\ref{thm:main}, one for each case where a fractional Helly and a $(p,q)$-type result is known, but such a list would add little to the paper.
We only show some colorful variants of some of the results discussed above in Subsection~\ref{subsec:colorful}. Section~\ref{sec:proof} contains the proof of our main Theorem~\ref{thm:main}. At the end of the proof, in Subsection~\ref{subsec:final} we state Theorem~\ref{thm:mainGeneralized}, a strengthening of Theorem~\ref{thm:main}, where the assumption about the fractional Helly property is weakened. We show an application of this slightly stronger statement in Section~\ref{sec:boxes}.

\subsection{Colorful variants}\label{subsec:colorful}

Bárány, Fodor, Montejano, Oliveros, and Pór \cite{barany2014colourful} proved a colorful variant of Theorem~\ref{thm:pq} of Alon and Kleitman \cite{alon1992piercing}. For a given sequence $\mathcal{F}_1, \mathcal{F}_2, \ldots$ of families, a sequence of elements $C_1, C_2, \ldots$ is \textit{heterochromatic} if there exists $i_1 < i_2 < \ldots$ with $C_j \in \mathcal F_{i_j}$.

\begin{theorem}[Bárány, Fodor, Montejano, Oliveros, Pór \cite{barany2014colourful}]\label{thm:BaranyFodor}
    For every positive integers $p \geq d+1$ there exists a positive integer $C(p,d)$ such that the following holds. Let $\mathcal{F}_1,\mathcal{F}_2, \ldots, \mathcal{F}_p$ be families of compact convex sets in $\mathbb{R}^d$. If every heterochromatic sequence of convex sets of length $p$ contains $d+1$ intersecting sets, then there exists an $\mathcal{F}_i$ which can be pierced by $C(p,d)$ points.
\end{theorem}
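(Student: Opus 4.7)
The plan is to adapt the Alon-Kleitman proof of Theorem~\ref{thm:pq} to the colorful setting by replacing each of its three ingredients with a colorful analog: the Katchalski-Liu fractional Helly theorem (Theorem~\ref{thm:fh}), a weak $\varepsilon$-net theorem for convex sets in $\mathbb{R}^d$, and LP duality between fractional covering and fractional matching.

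The key new ingredient is a colorful fractional Helly theorem: if $\mathcal{G}_1,\ldots,\mathcal{G}_{d+1}$ are finite families of compact convex sets in $\mathbb{R}^d$ and at least an $\alpha$-fraction of the $|\mathcal{G}_1|\cdots|\mathcal{G}_{d+1}|$ rainbow $(d+1)$-tuples (one set chosen from each $\mathcal{G}_i$) have a common point, then some single $\mathcal{G}_i$ contains a subfamily of size $\beta(\alpha,d)\,|\mathcal{G}_i|$ sharing a common point. I would establish this by combining Lov\'asz's colorful Helly theorem with Theorem~\ref{thm:fh} via an averaging argument over the $d$-element subsets of the $d+1$ color classes.

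With this in hand, restrict each $\mathcal{F}_i$ to large finite subfamilies $\mathcal{G}_i$. Every rainbow $p$-tuple contains an intersecting $(d+1)$-tuple supported on some $(d+1)$-subset $I\subseteq [p]$ of colors; pigeonholing over the $\binom{p}{d+1}$ choices of $I$ and double-counting shows that for some fixed $I$ at least a $1/\binom{p}{d+1}$-fraction of the rainbow $(d+1)$-tuples from $\{\mathcal{G}_i : i\in I\}$ are intersecting. Colorful fractional Helly then yields, for some $i\in I$, a linear-sized subfamily of $\mathcal{G}_i$ with a common point; a further pigeonhole over $I$ and $i$ as the $\mathcal{G}_i$'s grow fixes a single $i^\ast\in [p]$ for which this conclusion holds uniformly on arbitrarily large finite subfamilies of $\mathcal{F}_{i^\ast}$. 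Feeding this weak-net property into the standard Alon-Kleitman LP-duality plus $\varepsilon$-net machinery, now applied to the single family $\mathcal{F}_{i^\ast}$, gives the desired uniform piercing bound $C(p,d)$.

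The main obstacle is the colorful fractional Helly step, since the $(p,d+1)$-hypothesis only produces intersecting $(d+1)$-tuples from \emph{some} color subset, and one must convert this diffuse combinatorial information into a common-point conclusion for a \emph{single} color class. Once this step is in place, the remaining $\varepsilon$-net and LP-duality steps are essentially verbatim copies of their monochromatic counterparts and pose no additional difficulty.
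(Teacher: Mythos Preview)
This theorem is not proved in the paper under review; it is quoted from \cite{barany2014colourful} and used as a black box in the proof of Corollary~\ref{cor:BaranyFodor}, so there is no proof here to compare your attempt against.

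That said, your outline names the right ingredients---a colorful fractional Helly theorem, pigeonhole over the $\binom{p}{d+1}$ color-subsets, LP duality, and weak $\varepsilon$-nets---and these are indeed the tools of \cite{barany2014colourful}. However, the step in which you ``fix a single $i^\ast$'' and then run the Alon--Kleitman machinery on $\mathcal{F}_{i^\ast}$ alone has a gap. Colorful fractional Helly, applied to a particular choice of $\mathcal{G}_1,\ldots,\mathcal{G}_p$, produces \emph{some} index $i$ with a $\beta$-fraction of $\mathcal{G}_i$ sharing a point, but that index may shift when you vary the other $\mathcal{G}_j$; pigeonholing as the $\mathcal{G}_j$'s grow does not yield a fixed $i^\ast$ for which \emph{every} weighted finite subfamily of $\mathcal{F}_{i^\ast}$ enjoys the first-selection property, and that is precisely what the LP-duality step requires. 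The correct route (and essentially what \cite{barany2014colourful} does) is to argue on all colors simultaneously: assume for contradiction that every $\mathcal{F}_i$ has fractional covering number exceeding some threshold $T$, take by LP duality a fractional matching of value greater than $T$ on each $\mathcal{F}_i$, apply colorful fractional Helly to these $p$ weighted families at once, and observe that the resulting point meets weight greater than $\beta T$ in some $\mathcal{F}_i$, contradicting the matching constraint once $\beta T>1$. Only after bounding the fractional covering number of some $\mathcal{F}_i$ in this way do you invoke the weak $\varepsilon$-net theorem.
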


This implies the following infinite variant.

\begin{corollary}\label{cor:BaranyFodor}
    Let $\mathcal{F}_1,\mathcal{F}_2, \ldots$ be families of compact convex sets in $\mathbb{R}^d$. If every infinite heterochromatic sequence of convex sets contains $d+1$ intersecting sets, then there exists an $\mathcal{F}_i$ which can be pierced by finitely many points.
\end{corollary}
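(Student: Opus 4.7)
The plan is to argue by contradiction, following the same template as the proof of Corollary~\ref{cor:alephMeetsClassical} but with colorful ingredients. Suppose for contradiction that no $\mathcal{F}_i$ can be pierced by finitely many points. The goal is to produce an infinite heterochromatic sequence no $d+1$ of whose members are intersecting, contradicting the hypothesis.

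I would encode the situation as a hypergraph $\mathcal{H}$ with vertex set $V(\mathcal{H})=\{(i,C):i\in\mathbb{N},\,C\in\mathcal{F}_i\}$, whose $(d+1)$-uniform part consists of those $(d+1)$-subsets $\{(i_1,C_1),\ldots,(i_{d+1},C_{d+1})\}$ for which the indices $i_1,\ldots,i_{d+1}$ are pairwise distinct and $C_1\cap\cdots\cap C_{d+1}\neq\emptyset$. Under this encoding, infinite heterochromatic sequences with no $d+1$ intersecting correspond to infinite independent sets of $\mathcal{H}^{(d+1)}$ that span infinitely many distinct indices. Arbitrarily large finite rainbow independent sets are supplied by the contrapositive of Theorem~\ref{thm:BaranyFodor}: since no $\mathcal{F}_i$ is pierceable by $C(p,d)$ points, for each $p\ge d+1$ the hypothesis of Theorem~\ref{thm:BaranyFodor} applied to $\mathcal{F}_1,\ldots,\mathcal{F}_p$ fails, yielding a heterochromatic $p$-sequence with no $d+1$ intersecting. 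A suitable colorful fractional Helly property for $\mathcal{H}^{(d+1)}$ can be derived in a standard way from the colorful fractional Helly theorem for compact convex sets due to B\'ar\'any, Fodor, Montejano, Oliveros, and P\'or~\cite{barany2014colourful}.

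The main obstacle is obtaining a colorful analog of Theorem~\ref{thm:main}: a colored $(d+1)$-uniform hypergraph satisfying a colorful fractional Helly property and admitting arbitrarily large finite rainbow independent sets admits an infinite rainbow independent set. I expect that this analog can be obtained by a fairly direct adaptation of the argument in Section~\ref{sec:proof}, tracking the color information throughout and probably passing through the stronger Theorem~\ref{thm:mainGeneralized}, whose weakened fractional Helly hypothesis seems well suited to the multi-colored setting. Once such a colorful analog is available, applying it to $\mathcal{H}^{(d+1)}$ produces an infinite rainbow independent set, which translates back into the desired infinite heterochromatic sequence with no $d+1$ intersecting sets and closes the contradiction.
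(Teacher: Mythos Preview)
Your proposal leaves the decisive step unproved: you explicitly identify the ``colorful analog of Theorem~\ref{thm:main}'' as the main obstacle and then only say you \emph{expect} it can be obtained by adapting Section~\ref{sec:proof}. That is a genuine gap, not a proof. Moreover, your encoded hypergraph $\mathcal{H}^{(d+1)}$ does not obviously satisfy the (ordinary) fractional Helly property---any $(d+1)$-tuple with a repeated color is a non-edge, so large monochromatic sets are trivially independent---so you really would need to formulate and prove both a colorful fractional Helly statement for $\mathcal{H}^{(d+1)}$ and a colored version of Theorem~\ref{thm:main} guaranteeing an infinite \emph{rainbow} independent set. None of this is carried out.

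The paper sidesteps all of this with a one-line observation you are missing: no colorful analog is needed. From the contrapositive of Theorem~\ref{thm:BaranyFodor} you can choose, for increasing $p$, heterochromatic $p$-sequences with no $d+1$ intersecting coming from \emph{pairwise disjoint} blocks of families (e.g., first from $\mathcal{F}_1,\ldots,\mathcal{F}_{p_1}$, then from $\mathcal{F}_{p_1+1},\ldots,\mathcal{F}_{p_1+p_2}$, etc.). Concatenating these gives a single infinite heterochromatic sequence $D_1,D_2,\ldots$ that contains arbitrarily large independent sets in $\mathcal{K}_d^{(d+1)}$. Now apply the ordinary Theorems~\ref{thm:fh} and~\ref{thm:main} to $\mathcal{K}_d^{(d+1)}$ restricted to $\{D_1,D_2,\ldots\}$: you get an infinite independent subset, which is automatically heterochromatic because every $D_j$ already has a distinct color. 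This yields the contradiction directly, with no new machinery.
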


\begin{proof}[Proof of Corollary~\ref{cor:BaranyFodor}]
    Suppose that no $\mathcal{F}_i$ can be pierced by finitely many points. By Theorem~\ref{thm:BaranyFodor}, for every finite $p$ and every $\mathcal{F}_{i_1}, \ldots, \mathcal{F}_{i_p}$ there exists a heterochromatic sequence $C_1 \in \mathcal{F}_{i_1}, \ldots, C_p \in \mathcal{F}_{i_p}$ without $d+1$ intersecting set. This implies that there exists an infinite heterochromatic sequence which contains arbitrarily large finite subsequences without $d+1$ intersecting sets. By Theorems~\ref{thm:fh} and \ref{thm:main} we can find an infinite heterochromatic subsequence without $d+1$ intersecting sets, which contradicts the assumption of Corollary~\ref{cor:BaranyFodor}.
\end{proof}

By replacing Theorems~\ref{thm:fh} and \ref{thm:BaranyFodor} by other fractional Helly and colorful $(p,q)$-results, our Theorem~\ref{thm:main} can be applied to other colorful variants as well. We mention only one of them, which is about $k$-flats intersecting Euclidean balls. Its weaker analog (for balls whose radius is in the range $[r,R]$) was proved by Ghosh and Nandi~\cite{ghosh2022heterochromatic}.

\begin{corollary}\label{cor:CKellerPerles}
    Let $\mathcal{F}_1,\mathcal{F}_2, \ldots$ be families of closed balls in $\mathbb{R}^d$. If every heterochromatic sequence of balls contains $k+2$ balls with a $k$-transversal (i.e., there is a $k$-flat stabbing all $k+2$ balls), then there exists an $\mathcal{F}_i$ which can be pierced by finitely many $k$-flats.
\end{corollary}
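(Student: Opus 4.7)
The plan is to mirror the proof of Corollary~\ref{cor:BaranyFodor} in the setting of $k$-transversals to Euclidean balls. Two ingredients are needed: (i) the fractional Helly property for the hypergraph $\mathcal{B}_{d,k}^{(k+2)}$, established in \cite{jung2024fat}; and (ii) a colorful $(p,k+2)$-theorem for $k$-transversals to balls, namely, that for every $p \geq k+2$ there is $C=C(p,d,k)$ such that whenever $\mathcal{F}_1,\dots,\mathcal{F}_p$ are families of closed balls in $\mathbb{R}^d$ with the property that every heterochromatic length-$p$ sequence contains $k+2$ balls admitting a common $k$-transversal, some $\mathcal{F}_i$ can be pierced by $C$ $k$-flats. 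The latter follows from the Bárány–Fodor–Montejano–Oliveros–Pór framework \cite{barany2014colourful}, whose argument uses only the fractional Helly property together with an uncolored $(p,q)$-theorem, both of which are supplied unconditionally by \cite{jung2024fat}.

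Given (i) and (ii), I argue by contradiction. Suppose no $\mathcal{F}_i$ can be pierced by finitely many $k$-flats. Then in particular no $\mathcal{F}_i$ can be pierced by $C(p,d,k)$ $k$-flats for any $p$, so (ii) gives, for every $p$ and every choice of indices $i_1 < \dots < i_p$, a heterochromatic sequence $B_1 \in \mathcal{F}_{i_1}, \dots, B_p \in \mathcal{F}_{i_p}$ with no $k+2$ of the $B_j$ admitting a common $k$-transversal. A diagonal/compactness argument, identical to the one implicit in the proof of Corollary~\ref{cor:BaranyFodor}, then yields a single infinite heterochromatic sequence $B_1, B_2, \dots$ such that for every $p$ it has a length-$p$ subsequence containing no $k+2$ balls with a common $k$-transversal. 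Equivalently, the restriction of $\mathcal{B}_{d,k}^{(k+2)}$ to $\{B_1, B_2, \dots\}$ has arbitrarily large finite independent sets.

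Finally I apply Theorem~\ref{thm:main} to this restricted hypergraph. It inherits the fractional Helly property from $\mathcal{B}_{d,k}^{(k+2)}$ and has arbitrarily large finite independent sets, hence it has an infinite independent set. That infinite independent set is exactly an infinite heterochromatic subsequence of balls containing no $k+2$ with a common $k$-transversal, contradicting the hypothesis of the corollary.

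The main obstacle I anticipate is verifying (ii) in full generality for balls of arbitrary radii, since Ghosh–Nandi \cite{ghosh2022heterochromatic} handle only the bounded-radius case. However, the Bárány–Fodor–Montejano–Oliveros–Pór machinery is essentially formal once one has a fractional Helly theorem and an uncolored $(p,q)$-theorem, both provided by \cite{jung2024fat}; the remaining work is therefore bookkeeping rather than new ideas. The compactness step producing the single infinite heterochromatic sequence from the per-$p$ finite heterochromatic witnesses is also routine, being the same one used (without fanfare) in the proof of Corollary~\ref{cor:BaranyFodor}.
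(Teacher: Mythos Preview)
Your proposal is correct and follows essentially the same approach as the paper: argue by contradiction, use a colorful $(p,k+2)$-theorem to produce finite heterochromatic witnesses, pass to a single infinite heterochromatic sequence with arbitrarily large independent subsets, and invoke Theorem~\ref{thm:main} together with the fractional Helly property for $\mathcal{B}_{d,k}^{(k+2)}$ to obtain an infinite independent heterochromatic subsequence. The only minor difference is that the paper cites both ingredients (the fractional Helly property and the colorful $(p,k+2)$-theorem for balls) directly as Theorems~5 and~8 of \cite{jung2024fat}, so your concern about having to rederive~(ii) from the B\'ar\'any--Fodor--Montejano--Oliveros--P\'or framework is unnecessary.
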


\begin{proof}
    Combine Theorems 5 and 8 from \cite{jung2024fat} with Theorem~\ref{thm:main} as in the proof of Corollary~\ref{cor:BaranyFodor}.
\end{proof}

Keller and Perles (personal communication) proved the same corollary independently with a different method.
After the first version of our manuscript appeared online, Chakraborty, Ghosh and Nandi~\cite{chakraborty2024heterochromatic} also derived the above corollary, with methods similar to Keller and Perles \cite{keller2022aleph_0}.
Their proof also works if we replace balls with convex sets that are not too elongated; we cannot prove such results with our methods because of a lack of weak epsilon nets for the hypergraph given by such sets stabbed by $k$-flats.\\

The following example, which we also learned from Keller, shows that we cannot weaken the assumption of Corollary~\ref{cor:CKellerPerles} and only require that every heterochromatic sequence of balls with exactly one ball from all the families contains a subsequence of $k+2$ balls with a $k$-transversal (as we incorrectly claimed in the first version of our manuscript). Let $\mathcal{F}_0$ be an infinite family $\{B_1, B_2, \ldots\}$ of balls such that no $k+2$ of them have a $k$-transversal and for $i > 0$ let $\mathcal{F}_i$ be a family of balls inside $B_i$ such that no $k+2$ of them have a $k$-transversal. In this case, there is no $\mathcal{F}_i$ which can be pierced by finitely many $k$-flats, but no matter how we choose one ball from each of the families, the ball chosen from $\mathcal{F}_0$ will intersect another chosen ball, thus some $k+2$ of the chosen balls will have a $k$-transversal.
A one-dimensional version of the example shows that the assumption of Corollary~\ref{cor:BaranyFodor} also cannot be weakened to only assume $d+1$ intersecting sets in heterochromatic sequences with exactly one set from each family.

\section{Proof of Theorem~\ref{thm:main}}
\label{sec:proof}

In Section~\ref{subsec:Mqt}, we show a class $M_s^{(q)}(t)$ of forbidden subhypergraphs of hypergraphs satisfying the fractional Helly property. In Section~\ref{subsec:superhom}, we prove a lemma about finding highly homogeneous subhypergraphs in infinite hypergraphs. Finally, in Section~\ref{subsec:final}, we prove Theorem~\ref{thm:main} by showing how homogeneous $M_s^{(q)}(t)$-free hypergraphs with arbitrary large finite independent sets contain infinitely large independent sets.

\subsection{A consequence of the fractional Helly property}\label{subsec:Mqt}

Motivated by Holmsen \cite{holmsen20}, for any $s,t\ge q$ we define $M_s^{(q)}(t)$ as a class of $q$-uniform hypergraphs as follows. Take $st$ vertices divided into $s$ parts of size $t$ such that we have a complete $q$-uniform $s$-partite hypergraph among the parts, but no edge inside any part. There is no restriction on the ``mixed'' edges that intersect more than one, but less than $q$ parts. 
If $q=2$, there are no mixed edges, the only  graph in the family $M_s^{(2)}(t)$ is the complete $s$-partite graph $K_{t,\ldots, t}$. % with classes of size $t$.
For $q>2$, however, there are several different $q$-uniform graphs in $M_s^{(q)}(t)$. We call a $q$-uniform hypergraph $M_s^{(q)}(t)$-free if it contains none of them as an induced subgraph. 
By monotonicity, if a hypergraph is $M_s^{(q)}(t)$-free, it is also $M_s^{(q)}(t+1)$-free and $M_{s+1}^{(q)}(t)$-free.

Holmsen \cite{holmsen20} proved that for any $s\ge q$, the $M_s^{(q)}(q)$-free $q$-uniform hypergraphs satisfy the fractional Helly property,
which can be interpreted as that the `fractional Helly number' of any hypergraph is 
%thereby showing that the fractional Helly number is always EZEK NEM VOLTAK DEFINIÁLVA
at most as large as the `colorful Helly number' of the hypergraph (see also \cite{holmsen21}). In the opposite direction, we observe the following.

\begin{claim}\label{cl:Mqt}
    Every $q$-uniform hypergraph that has the fractional Helly property is $M_q^{(q)}(t)$-free for $t> \frac{q-1}{\beta}$ where $\beta$ belongs to $\alpha=\frac{q!}{q^q}$. 
\end{claim}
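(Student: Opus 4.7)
The plan is to argue by contradiction: assume $\mathcal{H}$ contains an induced subhypergraph $H \in M_q^{(q)}(t)$ with $t > (q-1)/\beta$, and apply the fractional Helly property to the vertex set of $H$ to extract a clique that is too large to fit inside the forbidden structure.

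First I would set up the counting. Let $S$ be the vertex set of $H$, so $|S| = qt$ and $S$ is partitioned into $q$ parts of size $t$. Since $H$ contains the complete $q$-uniform $q$-partite hypergraph between the parts, we have $e(\mathcal{H}[S]) = e(H) \geq t^{q}$. On the other hand, using the crude bound $\binom{qt}{q} \leq (qt)^{q}/q! = q^{q} t^{q}/q!$, with $\alpha = q!/q^{q}$ we get
\[
\alpha \binom{|S|}{q} \;\leq\; \frac{q!}{q^{q}} \cdot \frac{q^{q} t^{q}}{q!} \;=\; t^{q} \;\leq\; e(\mathcal{H}[S]).
\]
Hence the density condition of the fractional Helly property is satisfied with parameter $\alpha = q!/q^{q}$, and we obtain a clique $T$ in $\mathcal{H}[S]$ with $|T| \geq \beta |S| = \beta q t$.

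Next I would use the induced structure of $H$ to cap the size of $T$. Because $T$ is a clique in $\mathcal{H}[S] = H$, every $q$-subset of $T$ is an edge of $H$. But by definition of $M_q^{(q)}(t)$, no $q$-subset lying entirely inside a single part is an edge. Therefore $T$ contains at most $q-1$ vertices from each of the $q$ parts, so $|T| \leq q(q-1)$. Combining this with the lower bound $|T| \geq \beta q t$ yields $t \leq (q-1)/\beta$, contradicting the hypothesis $t > (q-1)/\beta$.

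There is no real obstacle here: once the constant $\alpha = q!/q^{q}$ is chosen so that the complete $q$-partite hypergraph on $qt$ vertices already has density at least $\alpha$, the fractional Helly property immediately supplies a linear-sized clique, and the only subtle point is verifying the elementary bound $\binom{qt}{q} \leq q^{q} t^{q}/q!$ and noting that a clique must meet each part in fewer than $q$ vertices. The threshold $t > (q-1)/\beta$ then drops out of these two linear bounds on $|T|$.
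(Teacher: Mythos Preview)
Your proof is correct and follows the same argument as the paper's: both count that $t^q \ge \frac{q!}{q^q}\binom{qt}{q}$, invoke the fractional Helly property to get a clique of size at least $\beta q t$, and compare with the upper bound $q(q-1)$ on clique size in any member of $M_q^{(q)}(t)$. Yours is simply a more detailed write-up of the paper's three-line proof.
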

\begin{proof}
    A graph from $M_q^{(q)}(t)$ would have $t^q\ge \frac{q!}{q^q}\binom{qt}{q}$ edges, so by the fractional Helly property would contain a clique of size $\beta qt$, but the largest clique in any graph from $M_q^{(q)}(t)$ has size at most $(q-1)q$.
\end{proof}

Thus, $M_s^{(q)}(q)$-freeness implies the fractional Helly property for any $s \geq q$ by \cite{holmsen20}, and the fractional Helly property implies $M_q^{(q)}(t)$-freeness for some $t \geq q$ by Claim~\ref{cl:Mqt}. Although it will not be used in the proof of Theorem~\ref{thm:main}, for the sake of completeness, we observe that neither of the above implications can be reversed.

\begin{claim}
    There are hypergraphs with the fractional Helly property which are not $M_s^{(q)}(q)$-free for any $s \geq q$, and there are $M_q^{(q)}(q+1)$-free hypergraphs without the fractional Helly property.
\end{claim}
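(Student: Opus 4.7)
For the first statement my plan is to partition $V = \N$ into blocks $B_1, B_2, \ldots$ of size exactly $q$ and take $\mathcal{H}$ to be the $q$-uniform hypergraph whose edges are all $q$-subsets of $V$ \emph{other than} the blocks themselves. I would then verify that any clique of $\mathcal{H}$ misses at least one vertex from every fully-included block (since the block itself would otherwise be a forbidden $q$-subset inside the clique), giving a clique of size at least $\frac{q-1}{q}|S|$ inside every finite $S$, so $\mathcal{H}$ satisfies the fractional Helly property with $\beta = \frac{q-1}{q}$ for every $\alpha$. On the other hand, for any $s \geq q$ the subhypergraph of $\mathcal{H}$ induced on $B_1 \cup \cdots \cup B_s$ belongs to $M_s^{(q)}(q)$ (non-edges exactly the $s$ blocks, every transversal $q$-subset an edge, all possible mixed edges present), witnessing that $\mathcal{H}$ is not $M_s^{(q)}(q)$-free.

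For the second statement I would focus on $q \geq 3$, since for $q = 2$ a $K_{3,3}$-free graph has $O(n^{5/3})$ edges by K\H{o}v\'ari--S\'os--Tur\'an and so satisfies the fractional Helly property vacuously once $n$ is large. My plan is to let $\mathcal{H}$ be the $q$-uniform hypergraph on $\N$ whose edges are exactly the $q$-subsets that \emph{do not} form an arithmetic progression. To obtain $M_q^{(q)}(q+1)$-freeness I would reduce to the elementary fact that no $(q+1)$-element subset of $\N$ has all of its $q$-subsets equal to APs: comparing the APs $\{a_1,\ldots,a_q\}$ and $\{a_2,\ldots,a_{q+1}\}$ forces them to share a common difference $d$, but then $\{a_1,\ldots,a_{q-1},a_{q+1}\}$ has differences $d,\ldots,d,2d$ and is not an AP. Thus every would-be $(q+1)$-element part of an induced $M_q^{(q)}(q+1)$ must contain an internal edge of $\mathcal{H}$. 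To contradict fractional Helly I would take $S_n = \{1,\ldots,n\}$: the number of $q$-APs in $[n]$ is $O(n^2)$, so $e(\mathcal{H}[S_n])/\binom{n}{q} \to 1$, whereas a clique of $\mathcal{H}[S_n]$ is precisely a $q$-AP-free subset of $[n]$, which by Szemer\'edi's theorem has size $o(n)$.

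The step I expect to be the main obstacle is the second construction. Several natural candidates for dense, small-clique $q$-uniform hypergraphs (random hypergraphs, the complete $q$-partite $q$-uniform hypergraph, their sparse random subhypergraphs, or ``edges span $\geq 2$ parts'' constructions) either turn out too sparse to violate fractional Helly, or accidentally realise an induced copy of $M_q^{(q)}(q+1)$ on some $q(q+1)$-vertex set by a fortunate choice of partition aligned with the ambient combinatorial structure. The AP construction sidesteps both difficulties because clique size is controlled globally by a density theorem from additive combinatorics, while the $M_q^{(q)}(q+1)$ obstruction is ruled out locally on every $(q+1)$-subset by a one-line elementary computation.
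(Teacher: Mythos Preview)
Your first construction is exactly the paper's: the complement of an infinite perfect matching of $q$-sets, with the same $\beta=(q-1)/q$ bound and the same witness $B_1\cup\cdots\cup B_s$ for non-$M_s^{(q)}(q)$-freeness.

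For the second part with $q\ge 3$, your arithmetic-progression construction is correct and genuinely different from the paper's. The paper takes the blue part of an off-diagonal hypergraph Ramsey colouring of $K_n^{(q)}$ with no red $K_{q+1}^{(q)}$ and no blue clique of size $o(n)$. Both arguments share the same skeleton---make the complement $K_{q+1}^{(q)}$-free, so that $H$ has no independent $(q+1)$-set and hence no part of an induced $M_q^{(q)}(q+1)$, while keeping the clique number of $H$ sublinear---but you trade the probabilistic Ramsey input for Szemer\'edi's theorem. This buys an explicit construction at the price of a much deeper theorem; the paper's route is non-explicit but elementary and uniform in $q$.

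Your handling of $q=2$, however, is wrong, and the claim does need a proof there. Being $M_2^{(2)}(3)$-free means having no \emph{induced} $K_{3,3}$, whereas K\H{o}v\'ari--S\'os--Tur\'an bounds graphs with no $K_{3,3}$ \emph{subgraph}; these are very different conditions. Indeed, if $\bar G$ is triangle-free then $G$ has no induced $K_{3,3}$ (such a copy would yield two vertex-disjoint triangles in $\bar G$), and $G$ can be as dense as you like. The paper's Ramsey construction covers $q=2$: take $\bar G$ triangle-free on $n$ vertices with independence number $o(n)$ (which exists by the lower bound on $R(3,t)$); then $G$ is induced-$K_{3,3}$-free, has $(1-o(1))\binom{n}{2}$ edges, and clique number $o(n)$. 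Your AP hypergraph is genuinely useless here, since for $q=2$ every pair is an AP and the hypergraph is empty.
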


\begin{proof}
    For the first part, let $H$ be %the complete $q$-uniform, $\aleph_0$-partite hypergraph EZ NEM VOLT DEFINIÁLVA, MI LENNE A MIXED ÉLEKKEL? where each part has size $q$, i.e., $H$ is
    the complement of an infinite matching, i.e., the complement of infinitely many pairwise disjoint edges containing $q$ vertices each. This is clearly not $M_s^{(q)}(q)$-free for any $s \geq q$, and any $n$ vertices contain a clique of size at least $\frac{q-1}{q}n$, so $\beta=\frac{q-1}{q}$ is a good choice for every $\alpha$.
    %all its large enough finite subsets span subhypergraphs with linear size cliques. 

    For the second part, for any $n$ we construct a hypergraph $H_n$ which is $M_q^{(q)}(q+1)$-free, dense, but has no clique of linear size.
    It is a basic result from off-diagonal (hypergraph) Ramsey-theory that for every $s>q\ge 2$ we can color the edges of $K_n^{(q)}$ with red and blue such that there is no red $K_s^{(q)}$ and no blue $K_{f_s(n)}^{(q)}$, where $f_s(n)=o(n)$ (see for example the survey~\cite{mubayi2020survey}).
    If $H$ consists of the blue edges of such a coloring, then its complement contains no $K_{q+1}^{(q)}$, so $H$ is dense and $M_q^{(q)}(q+1)$-free, but its largest clique is $o(n)$, so $H$ does not have the fractional Helly property.
    %has $qn$ vertices, but has no clique of linear size. For a fix $n$, we start with a complete $q$-uniform $q$-partite graph with partite classes of size $n$, but we add some additional hyperedges to one of the partite classes in a way that the partite class will not span independent sets of size $q+1$, nor linear size cliques. This is doable because of off-diagonal hypergraph Ramsey results, see for example the survey~\cite{mubayi2020survey}.
\end{proof}

\subsection{Homogenization}\label{subsec:superhom}

The main ingredient in the proof of Theorem~\ref{thm:main} is a Ramsey-type statement about the existence of highly homogeneous subhypergraphs in infinite hypergraphs; we state this as an independent lemma. For $1 \leq p \leq q$, and a sequence $(V_i)_{i \in \N}$ of sets, we call a $q$-tuple $(v_1, v_2, \ldots, v_q)$ an \emph{increasing $q$-tuple of $(V_i)_{i \in \N}$ starting with $(v_1, \ldots, v_p)$} if there exist indices $i_1< i_2 < \ldots < i_q$ such that $v_j \in V_{i_j}$. We say that a $q$-uniform hypergraph $H$ spanned by $\cup_i V_i$ is \emph{homogeneous with respect to an increasing $p$-tuple} $(v_1, \ldots, v_p)$, if either all increasing $q$-tuples starting with $(v_1, \ldots, v_p)$ are edges, or no increasing $q$-tuples starting with $(v_1, \ldots, v_p)$ are edges.  We say that $H[\cup_i V_i]$ is \emph{$p$-homogeneous}, if it is homogeneous with respect to every growing $p$-tuple. Finally, for an infinite sequence $(V_i)_{i \in \N}$ of sets, a \emph{subsequence of subsets} is an infinite sequence $(V_i')_{i \in \N}$ of sets such that there exists $i_1 < i_2 < \ldots$ with $V_j' \subset V_{i_j}$.

\begin{lemma}\label{lem:superhom}
    For every $q$ and sequence of integers $(n_i')_{i \in \N}$ there exists a sequence of integers $(n_i)_{i \in \N}$ such that if $(V_i)_{i \in \N}$ is a sequence of pairwise disjoint vertex sets of a $q$-uniform hypergraph $H$ with $|V_i| \geq n_i$ for all $i$, then we can find a subsequence of subsets $(V_i')_{i \in \N}$ of $(V_i)_{i \in \N}$ such that
    \begin{enumerate}
        %\item There exists $i_1 < i_2 < \ldots$ with $V_j' \subset V_{i_j}$.
        \item We have $|V_i'| \geq n_i'$ for all $i$.
        \item $H[\cup_i V_i']$ is $(q-1)$-homogeneous.
    \end{enumerate}
\end{lemma}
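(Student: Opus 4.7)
The plan is to construct the subsequence of subsets iteratively: at each step $k$, extract one set $V_k'$ from a shrinking ``reservoir'' of candidate subsets, then refine the remaining reservoir so that past choices already force the desired homogeneity. The sequence $(n_i)_{i \in \N}$ will be defined recursively as a (large but computable) function of $(n_i')_{i \in \N}$, absorbing the accumulated pigeonhole losses.

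Concretely, initialize the reservoir as $W_l^{(1)} := V_l$ for all $l \geq 1$. At step $k \geq 1$, pick $V_k' \subset W_1^{(k)}$ with $|V_k'| = n_k'$, discard $W_1^{(k)}$ and shift indices so that the new reservoir is $(W_{l+1}^{(k)})_{l \geq 1}$, and then refine it as follows. For every increasing $(q-1)$-tuple $(v_1, \ldots, v_{q-1})$ in $V_1' \cup \cdots \cup V_k'$ whose last coordinate $v_{q-1}$ lies in the newly chosen $V_k'$, and for every set $W_l$ of the current reservoir, 2-color $v_q \in W_l$ by whether $\{v_1, \ldots, v_{q-1}, v_q\}$ is an edge of $H$, and replace $W_l$ by a monochromatic subset given by pigeonhole. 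Simultaneously monochromatizing all $T_k \leq n_k' \binom{n_1' + \cdots + n_{k-1}'}{q-2}$ such tuples costs a factor of $2^{-T_k}$ in the size of each $W_l$, producing the new reservoir $(W_l^{(k+1)})_{l \geq 1}$.

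The invariant maintained after step $k$ is: for every increasing $(q-1)$-tuple $\tau$ contained in $V_1' \cup \cdots \cup V_k'$, the edge status of $\tau \cup \{v_q\}$ is constant as $v_q$ ranges over any vertex of any set in the current reservoir. Tuples with last vertex in $V_k'$ are dealt with directly in step $k$'s refinement; older tuples (with last vertex in some $V_j'$ with $j < k$) were handled at step $j$ and remain monochromatic because the reservoir is only shrunk further. Since every future $V_j'$ (for $j > k$) is drawn from some later set in the current reservoir, after the construction terminates every increasing $(q-1)$-tuple in $\bigcup_i V_i'$ becomes homogeneous the moment its last coordinate is selected, which is exactly $(q-1)$-homogeneity of $H[\bigcup_i V_i']$.

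The main technical obstacle is the size bookkeeping. The set $V_k$ begins life as $W_k^{(1)}$, is shifted to become $W_1^{(k)}$ after $k-1$ steps, and is refined at each of steps $1, \ldots, k-1$, so one gets $|W_1^{(k)}| \geq n_k / 2^{T_1 + \cdots + T_{k-1}}$. Because each $T_j$ is an explicit function of $n_1', \ldots, n_j'$, the recursive definition $n_k := n_k' \cdot 2^{T_1 + \cdots + T_{k-1}}$ yields a valid sequence $(n_i)_{i \in \N}$ that makes every extraction step succeed. (The degenerate case $q = 1$ is handled by a single refinement of the empty tuple at step $1$, giving all singletons of the final family a common color.)
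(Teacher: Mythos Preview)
Your overall strategy---fix $V_k'$, then thin the remaining reservoir by pigeonhole on every new increasing $(q-1)$-tuple---is essentially the paper's. But the refinement step as you describe it has a gap: for a fixed tuple $\tau$ you replace \emph{each} $W_l$ separately by a monochromatic half, so different reservoir sets may retain \emph{different} colors with respect to the same $\tau$. Your stated invariant (``the edge status of $\tau\cup\{v_q\}$ is constant as $v_q$ ranges over any vertex of any set in the current reservoir'') therefore does not follow from the procedure; you only get constancy within each individual $W_l^{(k+1)}$. Since $V_{k+1}',V_{k+2}',\ldots$ are extracted from \emph{distinct} reservoir sets, an increasing $(q-1)$-tuple $\tau$ ending in $V_k'$ can extend to an edge into $V_{k+1}'$ and to a non-edge into $V_{k+2}'$, so $(q-1)$-homogeneity fails. (Already for $q=2$: if $v_1\in V_1'$ and the two halves kept in $W_1^{(2)}$ and $W_2^{(2)}$ have opposite colors, then $v_1$ is adjacent to all of $V_2'$ and to none of $V_3'$.)

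The missing ingredient is exactly what the paper's one-tuple step (Claim~\ref{cl:homStep}) does: after halving, one of the two colors occurs for infinitely many of the $W_l$, so pass to that infinite subsequence of the reservoir. This costs nothing in set sizes, but it invalidates your bookkeeping identity ``$V_k$ becomes $W_1^{(k)}$'': now $W_1^{(k)}$ is a subset of some $V_{j_k}$ with $j_k\ge k$, not necessarily of $V_k$. To still guarantee $|W_1^{(k)}|\ge n_k'$ you need $n_j\ge 2^{T_1+\cdots+T_{k-1}}\,n_k'$ for every $j\ge k$, which is why the paper's recursion for $n_{i+1}$ carries $\max\{n_1',\ldots,n_{i+1}'\}$ rather than just $n_{i+1}'$. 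With these two adjustments your argument goes through and matches the paper's.
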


The proof uses standard (hypergraph) Ramsey-type arguments.

\begin{proof}
Let $(n_i')_{i \in \N}$ be fixed. Without trying to optimize the values, let $n_i = n_i'$ for $i < q$ and $n_{i+1} = 2^{n_1 \cdots n_{i}}\max \{n'_1, \ldots, n'_{i+1}\}$ for $i \geq q$. Let $(V_i)_{i \in \N}$ be a sequence of finite pairwise disjoint subsets of $V(H)$ such that each $|V_i| \geq n_i$. Fix a well-ordering $T_1 \prec T_2 \prec \ldots$ of the $(v_{i_1}, \ldots, v_{i_{q-1}})$ increasing $(q-1)$-tuples of $(V_i)_{i \in \N}$ with the property that $(q-1)$-tuples ending at a lower level come first. More formally, if $T_i = (v_1, \ldots, v_{q-1})$ and $T_j = (w_1, \ldots, w_{q-1})$ are increasing $(q-1)$-tuples with $v_{q-1} \in V_s$ and $w_{q-1} \in V_t$, then $s < t$ implies $i < j$. We can homogenize $H[\cup_i V_i]$ with respect to all increasing $(q-1)$-tuples one by one by iteratively restricting to subsequences of subsets as follows. Let $(V^0_i)_{i \in \N}=(V_i)_{i \in \N}$.

\begin{claim}\label{cl:homStep}
    Let $(V^m_i)_{i \in \N}$ be a subsequence of subsets of $(V_i)_{i \in \N}$ and let $T = (v_1, \ldots, v_{q-1})$  be an increasing $(q-1)$-tuple with $v_{q-1} \in V^m_\ell$ and suppose that $H[\cup_i V^m_i]$ is already homogeneous with respect to all increasing $(q-1)$-tuples $S \in \cup_i V^m_i$with $S \prec T$. We can find an infinite subsequence of subsets $(V^{m+1}_i)_{i \in \N}$ of $(V^m_i)_{i \in \N}$ such that
    \begin{enumerate}
        \item $H[\cup_i V^{m+1}_i]$ is homogeneous with respect to $T$,
        \item If $V^{m+1}_i\subset V^m_j$, then $|V^{m+1}_i| \geq |V^m_j|/2$,
        \item If $i \leq \ell$, then $V^{m+1}_i = V^m_i$.
    \end{enumerate}
\end{claim}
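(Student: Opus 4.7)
The plan is a straightforward two-coloring pigeonhole. The increasing $q$-tuples starting with $T=(v_1,\dots,v_{q-1})$ in $\bigcup_i V^m_i$ are exactly the sets $\{v_1,\dots,v_{q-1},v\}$ with $v\in\bigcup_{j>\ell}V^m_j$, so homogeneity with respect to $T$ is equivalent to the following 2-coloring of $\bigcup_{j>\ell}V^m_j$ being constant on whatever vertices we keep above level $\ell$: color $v$ red if $\{v_1,\dots,v_{q-1},v\}$ is an edge of $H$, and blue otherwise.

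With this reformulation in hand, I would partition each $V^m_j$ with $j>\ell$ into its two color classes and let $W_j\subseteq V^m_j$ be a majority class, so $|W_j|\ge |V^m_j|/2$. By the infinite pigeonhole principle, there is a single color that is the color of $W_j$ for infinitely many $j$; enumerate the corresponding indices as $j_1<j_2<\cdots$. I would then set $V^{m+1}_i:=V^m_i$ for $i\le\ell$ and $V^{m+1}_{\ell+k}:=W_{j_k}$ for $k\ge 1$. Property~3 is immediate, and the tuple $T$ survives because all its vertices lie in $V^m_1\cup\cdots\cup V^m_\ell$. Property~2 holds because each $W_{j_k}$ contains at least half of $V^m_{j_k}$. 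Property~1 holds because every increasing $q$-tuple in $\bigcup_i V^{m+1}_i$ starting with $T$ has its last vertex in some monochromatic $W_{j_k}$, so either all such tuples are edges (if we picked the red color) or none are (if we picked blue).

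There is no substantive obstacle here; this is the routine Ramsey-type step that will be iterated in the proof of Lemma~\ref{lem:superhom}. The only bookkeeping point relevant to that outer induction is that restricting to subsets can only destroy $q$-tuples, never create new ones, so homogeneity with respect to every previously handled $(q-1)$-tuple $S\prec T$ is preserved for free, which is why the claim does not need to assert it explicitly.
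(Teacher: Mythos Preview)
Your proof is correct and essentially identical to the paper's: the paper calls a set $V^m_j$ with $j>\ell$ \emph{heavy} if at least half of its vertices $v$ give an edge $T\cup\{v\}\in H$ and \emph{light} otherwise, then keeps the infinitely many heavy (respectively light) sets restricted to their red (respectively blue) halves, which is exactly your majority-class-plus-pigeonhole argument phrased slightly differently.
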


\begin{proof}
For an $i > \ell$, call a set $V_i^m$ heavy if $|\{v \in V_i^m: T \cup \{v\}\in H\}| \geq \frac{1}{2}|V_i^m|$ and light otherwise. 

\textbf{Case 1: }There are infinitely many light sets. Let $i_{\ell + 1}, i_{\ell + 2}, \ldots$ be the indices of the light sets and let $V^{m+1}_j = \{v \in V^m_{i_j}: T \cup \{v\} \not \in H\}$ if $j > \ell$ and $V^{m+1}_j = V^m_j$ otherwise. 

\textbf{Case 2:} There are only finitely many light sets. Let $i_{\ell + 1}, i_{\ell + 2}, \ldots$ be the indices of the heavy sets and let $V^{m+1}_j = \{v \in V^m_{i_j}: T \cup \{v\} \in H\}$ if $j > \ell$ and $V^{m+1}_j = V^m_j$ otherwise. 
\end{proof}

We can iteratively apply this claim to every increasing $(q-1)$-tuple in the given order $\prec$ to find sequences $(V^0_i)_{i \in \N}, (V^1_i)_{i \in \N}, (V^2_i)_{i \in \N}, \ldots$ as follows. Suppose we already have $(V^m_i)_{i \in \N}$ and let $T_m$ be the first (according to  $\prec$) increasing $(q-1)$-tuple of $(V^m_i)_{i \in \N}$ such that $H[\cup_i V^m_i]$ is not homogeneous with respect to $T_m$. We can apply Claim~\ref{cl:homStep} to $(V^m_i)_{i \in \N}$ and $T_m$ to find a subsequence of subsets $(V^{m+1}_i)_{i \in \N}$ which is homogeneous with respect to $T_m$.

Repeating this process for every $m \in \mathbb{N}$, yields an infinite sequence of sequences $(V^0_i)_{i \in \N}, (V^1_i)_{i \in \N}, (V^2_i)_{i \in \N}, \ldots$ with the following properties. Most importantly, for every $i \in \mathbb{N}$ there is an $M(i)$ such that the sequence $(V^m_i)_{m \geq M(i)}$ is constant. This allows us to define $V'_i = V^{M(i)}_i$ for every $i \in \mathbb{N}$. The second property is that $H[\cup_i V'_i]$ is $(q-1)$-homogeneous.

For the third property, that $|V'_i| \geq n'_i$, define $N(n_1, \ldots n_s)$ as the number of increasing $(q-1)$-tuples of the finite sequence $(V_1, \ldots V_s)$ with $|V_i| = n_i$. To define $n_{s+1}$, observe that $n_{s+1} = 2^{N(n_1, \ldots, n_s)}\max \{n'_1, \ldots, n'_{s+1}\}\le 2^{n_1 \cdots n_s}\max \{n'_1, \ldots, n'_{s+1}\}$ is a good choice for $n_{s+1}$ for all $s \geq q$. Now if $V_i' \subset V_j$, then $|V_i'| \geq \frac{|V_j|}{2^{N(n_1, \ldots, n_j)}}$ by Claim~\ref{cl:homStep}. Since $j \geq i$, we have $|V_j| \geq 2^{N(n_1, \ldots, n_j)}n'_i$ and so $|V'_i| \geq n'_i$, finishing the induction step and thus the proof of Lemma~\ref{lem:superhom}.
\end{proof}

Lemma~\ref{lem:superhom} implies the existence of $p$-homogeneous subhypergraphs for every $1 \leq p < q$ by an easy induction argument.

\begin{corollary}\label{cor:superhom}
    For every $1 \leq p < q$ and a sequence of integers $(n_i')_{i \in \N}$ there exists a sequence of integers $(n_i)_{i \in \N}$ such that if $(V_i)_{i \in \N}$ is a sequence of pairwise disjoint vertex sets of a $q$-uniform hypergraph $H$ with $|V_i| \geq n_i$ for all $i$, then we can find a subsequence of subsets $(V_i')_{i \in \N}$ of $(V_i)_{i \in \N}$ such that
    \begin{enumerate}
        %\item There exists $i_1 < i_2 < \ldots$ with $V_j' \subset V_{i_j}$.
        \item We have $|V_i'| \geq n_i'$ for all $i$.
        \item $H[\cup_i V_i']$ is $p$-homogeneous.
    \end{enumerate}
\end{corollary}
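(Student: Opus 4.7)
My approach is downward induction on $p$, from $p=q-1$ down to $p=1$, with Lemma~\ref{lem:superhom} providing both the base case and the key reduction tool. The base case $p=q-1$ is exactly Lemma~\ref{lem:superhom}, so nothing needs to be done there.

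For the inductive step, I would assume the corollary for $p+1$ and prove it for $p$. Given a target sequence $(n_i')_{i \in \N}$, I would first apply Lemma~\ref{lem:superhom} with $p+1$ in place of $q$ and target $(n_i')_{i \in \N}$ to obtain an intermediate sequence $(n_i'')_{i \in \N}$; this is the sequence that will be needed in order to $p$-homogenize an auxiliary $(p+1)$-uniform hypergraph. Then I would apply the inductive hypothesis with target $(n_i'')_{i \in \N}$ to produce the desired $(n_i)_{i \in \N}$.

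Given $(V_i)_{i \in \N}$ with $|V_i|\geq n_i$, the actual extraction proceeds in two stages. First, use the inductive hypothesis to extract a subsequence of subsets $(V_i'')_{i \in \N}$ with $|V_i''|\geq n_i''$ on which $H$ is $(p+1)$-homogeneous. Second, define an auxiliary $(p+1)$-uniform hypergraph $H'$ on $\bigcup_i V_i''$ by declaring an increasing $(p+1)$-tuple $(v_1,\dots,v_{p+1})$ to be an edge of $H'$ if and only if every increasing $q$-tuple of $H$ starting with $(v_1,\dots,v_{p+1})$ is an edge of $H$; this is well-defined precisely because of the $(p+1)$-homogeneity established in stage one. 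Now apply Lemma~\ref{lem:superhom} to $H'$ (with $p+1$ playing the role of $q$) and target $(n_i')_{i\in\N}$ to obtain the further subsequence of subsets $(V_i')_{i \in \N}$ with $|V_i'|\geq n_i'$ on which $H'$ is $p$-homogeneous.

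The only verification left is that $p$-homogeneity with respect to $H'$ transfers to $p$-homogeneity with respect to $H$. Indeed, for any increasing $p$-tuple $(v_1,\dots,v_p)$ in $(V_i')_{i\in\N}$, either all its $(p+1)$-tuple extensions are $H'$-edges---in which case, by the construction of $H'$, every $q$-tuple extension of $(v_1,\dots,v_p)$ is an $H$-edge---or none of them are, in which case the $(p+1)$-homogeneity of $H$ forces every $q$-tuple extension of $(v_1,\dots,v_p)$ to be a non-edge of $H$. I do not expect a genuine obstacle in this argument; the induction merely iterates the Ramsey-type reasoning of Lemma~\ref{lem:superhom} at progressively lower uniformities, and all bookkeeping of the quantitative bounds is handled by threading the target sequences through the two applications at each inductive step.
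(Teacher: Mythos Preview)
Your proposal is correct and follows essentially the same route as the paper: backward induction on $p$ with Lemma~\ref{lem:superhom} as base case, first applying the inductive hypothesis to obtain $(p+1)$-homogeneity, then passing to an auxiliary $(p+1)$-uniform hypergraph and invoking Lemma~\ref{lem:superhom} once more to drop to $p$-homogeneity. Your write-up is in fact slightly more careful than the paper's in threading the target sequences $(n_i')\to(n_i'')\to(n_i)$ and in spelling out why $p$-homogeneity in $H'$ transfers to $p$-homogeneity in $H$.
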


\begin{proof}
    Let $q$ be fixed. We proceed by backward induction on $p$. The $p=q-1$ case is Lemma~\ref{lem:superhom}. For the induction step, let $|V_i| = n_i$ be large enough. By the induction hypotheses, we can find a sequence of subsets $(V^*_i)_{i \in \N}$ of $(V_i)_{i \in \N}$ such that the $|V^*_i|$s are still large enough and $H[\cup_i V^*_i]$ is $p$-homogeneous. Let $H^\downarrow$ be the $p$-uniform hypergraph whose edge set is $\{T \subset \cup_i V^*_i: T $ is a increasing $p$-tuple, $ S \in H $ for all increasing $q$-tuple $S$ starting with $T\}$. Note that if a subsequence of subsets $(V_i')_{i \in \N}$ of $(V^*_i)_{i \in \N}$ is $(p-1)$-homogeneous in $H^\downarrow$, then it is also $(p-1)$-homogeneous in $H$. Now we can use Lemma~\ref{lem:superhom} with $q=p$ and the $p$-uniform hypergraph $H^\downarrow$ to find the next subsequence of subsets.
\end{proof}

\subsection{Putting it all together}\label{subsec:final}

\begin{proof}[Proof of Theorem~\ref{thm:main}]
    Let $H$ be an infinite $q$-uniform hypergraph satisfying the fractional Helly property and with arbitrarily large independent sets. We know that $H$ is $M_q^{(q)}(t)$-free with some finite $t \geq q$ by Claim~\ref{cl:Mqt}. 
    By applying Corollary~\ref{cor:superhom} with $p = 1$ and $n_i' = t$ for all $i$, we obtain a sequence $n_i$ such that from any pairwise disjoint independent sets $V_1, V_2, \ldots$ of $H$ with $|V_i| \geq n_i$, we can find disjoint independent sets $V_1', V_2', \ldots$ with $|V_i'| = t$ such that $H[\cup_i V_i']$ is $1$-homogeneous. (Such large enough disjoint $V_i$ necessarily exist because $H$ contains arbitrarily large independent sets.) 
    Since $H$ is $M_q^{(q)}(t)$-free, and each $V_i'$ is independent, every $V_i'$ has to contain a vertex $v_i$ such that no increasing $q$-tuple starting at $v_i$ is an edge, otherwise because of $p$-homogeneity we could obtain a hypergraph from $M_q^{(q)}(t)$. Therefore, the $\{v_i|i \in \N\}$ form an infinite independent set, finishing the proof.
\end{proof}

We remark that we do not even need $M_q^{(q)}(t)$-freeness in the proof, it is enough to assume $M_s^{(q)}(t)$-freeness for an arbitrary $s \geq q$; even $s = \aleph_0$ is enough, just at the end instead of every $V_i'$ containing a $v_i$, we get that all but a finite number of the $V_i'$ contain a suitable $v_i$. Thus, we have the following strengthening of Theorem~\ref{thm:main}.

\begin{theorem}\label{thm:mainGeneralized}
    Let $q$ and $t$ be positive integers and $H$ be a $q$-uniform, $M_{\aleph_0}^{(q)}(t)$-free hypergraph. If $H$ contains arbitrarily large finite independent sets, then $H$ contains an infinitely large independent set.
\end{theorem}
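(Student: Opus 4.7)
The plan is to mimic the proof of Theorem~\ref{thm:main} given just above, changing only the final step where $M_q^{(q)}(t)$-freeness was used, since we now only have the weaker $M_{\aleph_0}^{(q)}(t)$-freeness at our disposal. First I would exploit the assumption of arbitrarily large finite independent sets to select pairwise disjoint independent sets $V_1, V_2, \ldots$ of $H$ with $|V_i|$ growing as fast as required by Corollary~\ref{cor:superhom}. Applying that corollary with $p=1$ and target sizes $n_i' = t$ produces a subsequence of subsets $V_1', V_2', \ldots$, each still independent and of size exactly $t$ (after trimming), such that $H[\cup_i V_i']$ is $1$-homogeneous.

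By $1$-homogeneity, every vertex $v \in V_i'$ falls into exactly one of two categories: call $v$ \emph{bad} if every increasing $q$-tuple of $(V_j')_{j \in \N}$ starting at $v$ is an edge, and \emph{good} otherwise (in which case no such increasing $q$-tuple is an edge). The key dichotomy is now the following: either all but finitely many of the $V_i'$ contain at least one good vertex $v_i$, in which case $\{v_i \colon i \in \N\}$ is an infinite independent set and we are done; or infinitely many $V_i'$ consist entirely of bad vertices, from which I plan to exhibit a forbidden induced copy of an element of $M_{\aleph_0}^{(q)}(t)$.

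For the second case, pass to the subsequence of the all-bad parts. Any $q$-subset across $q$ distinct of these parts, once ordered by part index, is an increasing $q$-tuple starting at a bad vertex and is therefore an edge of $H$; combined with the internal independence of each part, this exhibits an induced member of $M_{\aleph_0}^{(q)}(t)$ inside $H$ (recall that the definition of $M_s^{(q)}(t)$ places no restriction on edges hitting strictly between $2$ and $q-1$ parts), contradicting the hypothesis. The only small point to double-check is this correspondence between unordered $q$-subsets across distinct parts and increasing $q$-tuples, together with the unconstrained nature of mixed edges in the definition of $M_s^{(q)}(t)$; no genuine obstacle arises beyond what the proof of Theorem~\ref{thm:main} already handles.
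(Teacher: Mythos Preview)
Your proposal is correct and is exactly the argument the paper has in mind: the paper's own proof of Theorem~\ref{thm:mainGeneralized} is the one-line remark that in the proof of Theorem~\ref{thm:main} one only needs $M_{\aleph_0}^{(q)}(t)$-freeness, because then all but finitely many of the $V_i'$ must contain a suitable $v_i$. Your good/bad dichotomy and the contradiction via an all-bad subsequence are precisely the spelled-out version of that remark.
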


Rephrasing this in the contrapositive form in a more geometric setting: If a collection of objects $H$ satisfies some very weak form of the colorful Helly theorem, and among any infinitely many objects from $H$ some $q$ intersect, then there is a finite $p$ such that among any $p$ objects from $H$ some $q$ intersect. 
The very weak form that is already sufficient is that given $\aleph_0$ groups of $t$ objects such that any $q$ from different groups intersect, some $q$ from one group also intersect. We give an application of Theorem~\ref{thm:mainGeneralized} in the next section.

\section{Axis-parallel boxes and flats}\label{sec:boxes}

Theorem~\ref{thm:mainGeneralized} allows us to prove Keller-Perles-type infinite versions of the Alon-Kleitman theorem even if we do not have a suitable fractional Helly theorem for the given class of objects. For example, the following result of Chakraborty, Ghosh and Nandi is a corollary of known results and our Theorem~\ref{thm:mainGeneralized}.
They proved that if $\mathcal{F}$ is a family of axis-parallel boxes in $\mathbb{R}^d$ such that among every $\aleph_0$ of them some $2$ can be intersected with an axis-parallel $k$-flat, then all the members of $\mathcal{F}$ can be hit by finitely many axis-parallel $k$-flats.

Now we rephrase this result in our language.
Let $\mathcal{A}_{d,k}$ be the hypergraph whose vertices are axis-parallel boxes in $\mathbb{R}^d$ and whose edges represent families of boxes which can be intersected with a single axis-parallel $k$-flat. %dimensional affine subspace.

\begin{theorem}[Chakraborty, Ghosh and Nandi \cite{chakraborty2023stabbing}]\label{thm:boxesFlats}
    If $S \subset V(\mathcal{A}_{d,k})$ is such that $\mathcal{A}^{(2)}_{d,k}[S]$ has no infinitely large independent set, then $S$ can be covered with a finite number of edges of $\mathcal{A}_{d,k}$.
\end{theorem}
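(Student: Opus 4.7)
The plan is to apply Theorem~\ref{thm:mainGeneralized} to the graph $\mathcal{A}_{d,k}^{(2)}[S]$, combined with a finite $(p,2)$-theorem for axis-parallel boxes and axis-parallel $k$-flats. Since $\mathcal{A}_{d,k}^{(2)}[S]$ has no infinite independent set by hypothesis, we need two ingredients: (i) $M_{\aleph_0}^{(2)}(t)$-freeness of $\mathcal{A}_{d,k}^{(2)}$ for some finite $t=t(d,k)$, which via Theorem~\ref{thm:mainGeneralized} bounds its independence number by some finite $p$; and (ii) the finite $(p,2)$-theorem, which then covers $S$ by at most $C(p,d,k)$ such flats.

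Ingredient (ii) is handled by standard Alon-Kleitman-type machinery specialized to axis-parallel boxes. The key structural fact is that two axis-parallel boxes can be $k$-stabbed iff their projections onto some $(d-k)$-subset of coordinates all intersect, and axis-parallel boxes themselves have Helly number $2$. Partitioning edges of the graph by which coordinate-subset witnesses their overlap and applying classical fractional Helly plus $\varepsilon$-net arguments (or direct combinatorial arguments exploiting Helly number $2$) yields the bound $C(p,d,k)$.

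The main obstacle is (i), the $M_{\aleph_0}^{(2)}(t)$-freeness. My plan is Ramsey canonicalization followed by a reduction to the one-dimensional case. Assume for contradiction that $\mathcal{A}_{d,k}^{(2)}$ contains an induced $M_{\aleph_0}^{(2)}(t)$ with groups $G_1, G_2, \ldots$ of $t$ boxes each, where within-group pairs overlap on at most $d-k-1$ coordinates and cross-group pairs on at least $d-k$. For each cross-group pair $(B,B')$ choose a $(d-k)$-subset $T(B,B')\subseteq [d]$ on which their projections intersect. By iterated infinite Ramsey on the groups (with finitely many colors), I refine to an infinite subsequence of sub-groups, each still of large size, so that all cross-group pairs carry one fixed subset $T$. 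Since $|T|=d-k$ exceeds the within-group overlap, every within-group pair must be disjoint on at least one axis of $T$; a second Ramsey canonicalization, first inside each group and then across groups, produces infinitely many groups of size at least $2$ in which every within-group pair is disjoint on the same fixed axis $\alpha\in T$. Projecting onto $\alpha$ yields, on a single line, infinitely many groups of at least $2$ pairwise disjoint intervals with the property that any two intervals from distinct groups intersect. This is impossible already for two such groups: if $A_1,A_2\in G_1$ are disjoint with $A_1$ to the left of $A_2$, then any $B\in G_2$ meeting both must cover the entire gap between them, so any two such $B_1,B_2\in G_2$ cannot be disjoint, contradicting within-group disjointness of $G_2$. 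Choosing $t$ large enough to survive the Ramsey steps yields a finite threshold $t(d,k)$.

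Combining (i) and (ii) concludes the proof: Theorem~\ref{thm:mainGeneralized} and (i) bound the independence number of $\mathcal{A}_{d,k}^{(2)}[S]$ by a finite $p$, and (ii) then covers $S$ with $C(p,d,k)$ axis-parallel $k$-flats. The crux is the iterated Ramsey book-keeping in (i), ensuring that after both refinement steps each group retains at least two boxes so that the one-dimensional interval argument applies.
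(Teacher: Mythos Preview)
Your overall strategy matches the paper's exactly: establish $M_{\aleph_0}^{(2)}(t)$-freeness of $\mathcal{A}_{d,k}^{(2)}$ (the paper's Claim~\ref{cl:MfreeBoxes}), invoke Theorem~\ref{thm:mainGeneralized} to get a finite $p$, then apply a $(p,2)$-theorem (the paper's Claim~\ref{cl:pqforBoxes}).

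For ingredient (i) your argument is correct and in fact more elementary than the paper's. The paper first handles $k=0$ by quoting the weak fractional Helly theorem of B\'ar\'any, Fodor, Mart\'{\i}nez-P\'erez, Montejano, Oliveros and P\'or (Theorem~\ref{thm:weakFHforBoxes}), which requires edge density above $1-1/d^2$, and then Ramsey-reduces the case $k>0$ to $k=0$. You avoid Theorem~\ref{thm:weakFHforBoxes} entirely: after canonicalising to a common $(d-k)$-subset $T$ and a common axis $\alpha\in T$, you land on a one-dimensional statement about two pairs of disjoint intervals with all four cross-pairs intersecting, which is immediately contradictory. This is a genuinely simpler proof of Claim~\ref{cl:MfreeBoxes}. (The ``iterated infinite Ramsey'' you invoke to fix $T$ across all cross-group pairs is essentially an application of Lemma~\ref{lem:superhom}; it would be worth saying so explicitly.)

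Ingredient (ii), however, has a real gap. You propose to obtain the finite $(p,2)$-theorem via ``classical fractional Helly plus $\varepsilon$-net arguments'', but as the paper explicitly notes just before stating Theorem~\ref{thm:boxesFlats}, the graph $\mathcal{A}_{d,k}^{(2)}$ does \emph{not} satisfy the fractional Helly property when $k<d-1$. Already for $k=0$ and $d=2$, take $n$ thin horizontal and $n$ thin vertical rectangles in a grid pattern: about half of all pairs intersect, yet no point lies in more than two rectangles. So the Alon--Kleitman template does not run. Partitioning edges by the witnessing coordinate subset $T$ does not rescue this, since the $(p,2)$ hypothesis on the union $\bigcup_T G_T$ does not transfer to a $(p',2)$ hypothesis on any single $G_T$. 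The paper instead proves Claim~\ref{cl:pqforBoxes} through the Ding--Seymour--Winkler bound $\tau\le f(\nu,\lambda)$ (Theorem~\ref{thm:pairedVC}) applied to the dual hypergraph, bounding $\lambda$ by an Erd\H{o}s--Szekeres argument on the $2d$ natural orderings of the halfspaces defining the boxes, after a Ramsey reduction to a fixed flat direction. Your parenthetical ``direct combinatorial arguments exploiting Helly number~$2$'' could in principle be fleshed out, but as written (ii) is not established.
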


Note that the graph $\mathcal{A}^{(2)}_{d,k}$ does not satisfy the fractional Helly property if $k < d-1$, hence we cannot use Theorem~\ref{thm:main}.
However, we can prove Theorem~\ref{thm:boxesFlats} by applying Theorem~\ref{thm:mainGeneralized}, using the two below simple facts about finite families of boxes, whose proofs follow from standard methods, which can be found after the proof of Theorem~\ref{thm:boxesFlats}.

\begin{claim}\label{cl:MfreeBoxes}
    The graph $\mathcal{A}_{d,k}^{(2)}$ is $M_{s}^{(2)}(t)$-free for some large enough $t$ and $s$.
\end{claim}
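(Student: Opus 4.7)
The plan is to induct on the ambient dimension $d$, with the base case $d=1$ (which forces $k=0$) being the folklore fact that the interval intersection graph $\mathcal{A}_{1,0}^{(2)}$ is $K_{2,2}$-induced-free: two disjoint intervals $A_1,A_2$ together with two disjoint intervals $B_1,B_2$ satisfying $A_i\cap B_j\neq\emptyset$ for all $i,j$ would force both $B_j$ to span the gap between $A_1$ and $A_2$, hence to intersect each other, so $\mathcal{A}_{1,0}^{(2)}$ is already $M_2^{(2)}(2)$-free.

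For the inductive step, fix $0\le k<d$ and, for a pair of axis-parallel boxes $A,B$, let $I(A,B)\subseteq[d]$ denote the set of coordinates on which their projections meet, so that adjacency in $\mathcal{A}_{d,k}^{(2)}$ amounts to $|I(A,B)|\ge d-k$. Suppose for contradiction that $\mathcal{A}_{d,k}^{(2)}$ contains an induced $M_s^{(2)}(t)$ with groups $G_1,\dots,G_s$ of size $t$. First, inside each group I would assign to every pair $(A,A')$ a separator coordinate $c(A,A')\in[d]\setminus I(A,A')$, giving a $d$-coloring of the within-group pairs; taking $t$ large enough, the finite Ramsey theorem yields sub-groups of some uniform size $t_1$ in which all within-group pairs share a common separator, and a further pigeonhole on $[d]$ across the groups selects $s_1\ge s/d$ groups that all share a single separator coordinate $c$.

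Next I would 2-color every between-group pair by whether $c\in I(A,B)$, and by iterating bipartite Ramsey across the $\binom{s_1}{2}$ pairs of groups restrict to $s_2$ groups of size $t_2$ on which this 2-coloring is monochromatic. In Case~A ($c\in I(A,B)$ for every cross pair), the $c$-projections of the selected boxes, restricted to any two of the $s_2$ groups, exhibit an induced $K_{t_2,t_2}$ in the interval intersection graph, contradicting the base case for $t_2\ge 2$. In Case~B ($c\notin I(A,B)$ for every cross pair), the $c$-projections of all $s_2t_2$ boxes are pairwise disjoint, and $k\ge 1$ is automatic (since $k=0$ would force $|I(A,B)|=d\ni c$). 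Projecting out coordinate $c$, the boxes become axis-parallel boxes in $\mathbb{R}^{d-1}$ satisfying $|I_{[d-1]}|\le(d-1)-(k-1)-1$ within groups and $|I_{[d-1]}|\ge(d-1)-(k-1)$ between groups, i.e.\ an induced $M_{s_2}^{(2)}(t_2)$ in $\mathcal{A}_{d-1,k-1}^{(2)}$, contradicting the inductive hypothesis once $s_2,t_2$ exceed its thresholds. Choosing $s,t$ as (tower-type) functions of $d$ at the outset guarantees that the two Ramsey layers still output $s_2,t_2$ above all needed cutoffs.

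The main obstacle will be the between-group monochromatization step, where iterating bipartite Ramsey across many pairs of groups erodes the sizes sharply; however, since the claim only asks for \emph{some} finite $s$ and $t$ rather than explicit bounds, these losses are acceptable. The key structural point that makes the induction go through is the observation that Case~B genuinely reduces the problem to an induced $M^{(2)}$-configuration for the same hypergraph family in one fewer dimension (with both $d$ and $k$ dropping by one), rather than some less tractable object; Case~A handles the entire case $k=d-1$ without any recursion, and combined with Case~B it closes the induction.
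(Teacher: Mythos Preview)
Your argument is correct and takes a genuinely different route from the paper's proof. The paper handles the $k=0$ case in one shot by invoking the weak fractional Helly theorem of B\'ar\'any, Fodor, Mart\'{\i}nez-P\'erez, Montejano, Oliveros, and P\'or for boxes: with $s$ large enough the cross-edge density exceeds $1-\tfrac{1}{d^2}$, so a linear-size clique appears and must contain two boxes from one group. For general $k$ the paper then applies a single Ramsey step on the $\binom{d}{k}$ orientations of axis-parallel $k$-flats to find a common $W$ and projects everything to $W^\perp\cong\mathbb{R}^{d-k}$, reducing to the $k=0$ case in that lower dimension.

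Your proof instead inducts on $d$, peeling off one coordinate at a time. The key structural observation that makes this work---that after homogenizing on a single separator coordinate $c$ you land either in the interval base case (Case~A) or in an honest $M^{(2)}$-configuration for $\mathcal{A}_{d-1,k-1}^{(2)}$ (Case~B)---is neat, and your characterization $|I(A,B)|\ge d-k$ of adjacency is exactly right. The advantage of your approach is that it is entirely self-contained: you never appeal to the B\'ar\'any et al.\ theorem, only to elementary interval reasoning and finite Ramsey. The cost is that the iterated within-group and between-group Ramsey steps, compounded over $d$ levels of induction, give tower-type bounds on $s$ and $t$, whereas the paper's one-shot reduction keeps the bounds singly exponential (modulo the black box). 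As you note, the claim asks only for \emph{some} finite $s,t$, so this is immaterial for the stated result.
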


\begin{claim}\label{cl:pqforBoxes}
    For every finite $p \geq 2$ there exists a $C<\infty$ with the property that if $S \subset V(\mathcal{A}_{d,k})$ is such that $\mathcal{A}_{d,k}^{(2)}[S]$ does not contain independent sets of size $p$, then $S$ can be covered with $C$ edges of $\mathcal{A}_{d,k}$.
\end{claim}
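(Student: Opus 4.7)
The plan is to adapt the Alon--Kleitman $(p,2)$-theorem machinery to the hypergraph $\mathcal{A}_{d,k}$, using the projection structure of axis-parallel $k$-flats to work around the failure of fractional Helly for $\mathcal{A}^{(2)}_{d,k}$ itself when $k<d-1$.

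Every axis-parallel $k$-flat in $\mathbb{R}^d$ is a pair $(J,v)$ with $J \subset [d]$, $|J|=d-k$, $v \in \mathbb{R}^J$, hitting a box $B$ iff $v \in \pi_J(B)$, the projection of $B$ onto the coordinates in $J$ (itself an axis-parallel box in $\mathbb{R}^{d-k}$). Two boxes are jointly stabbable iff $\pi_J(B) \cap \pi_J(B') \neq \emptyset$ for some $J$. Writing $G_J$ for the intersection graph of $\{\pi_J(B) : B \in S\}$ in $\mathbb{R}^{d-k}$, we have $E(\mathcal{A}^{(2)}_{d,k}[S]) = \bigcup_J E(G_J)$ over the $m := \binom{d}{d-k}$ choices of $J$, and piercing $S$ by $k$-flats of type $J$ is equivalent to piercing $\pi_J(S)$ by points in $\mathbb{R}^{d-k}$.

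The key step is a first-selection lemma: for every finite $S' \subset S$, some single axis-parallel $k$-flat pierces at least a $\beta(p,d,k) > 0$ fraction of $S'$. The hypothesis makes the complement of $\mathcal{A}^{(2)}_{d,k}[S']$ a $K_p$-free graph, so by Tur\'an $|E(\mathcal{A}^{(2)}_{d,k}[S'])| \ge \binom{|S'|}{2}/(p-1)$; pigeonholing over the $m$ projections yields some $G_{J^*}$ containing at least $\binom{|S'|}{2}/(m(p-1))$ intersecting pairs of axis-parallel boxes in $\mathbb{R}^{d-k}$. By a Ramsey-style extraction (coloring each edge of $G_{J^*}[S']$ by which sub-projection $J' \subsetneq J^*$ witnesses the intersection, combined with the classical $(p,2)$-theorem of Alon--Kalai for axis-parallel boxes in $\mathbb{R}^{d-k}$ applied recursively to the monochromatic subfamilies), one finds either an $\mathcal{A}^{(2)}_{d,k}$-independent set of size $p$ contradicting the hypothesis, or a $G_{J^*}$-clique of constant-fraction size in $S'$; by Helly number $2$ for axis-parallel boxes, such a clique has a common point in its $\pi_{J^*}$-projections, yielding a single $k$-flat that pierces a $\beta$-fraction of $S'$.

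With the first-selection lemma in hand, the Alon--Kleitman argument runs in the standard way: LP duality converts the lemma into a bound $\tau^*(\mathcal{A}_{d,k}[S]) \le 1/\beta$ on the fractional transversal number, and the weak $\epsilon$-net theorem for the range space of boxes versus axis-parallel $k$-flats (of bounded VC-dimension, since each range is cut out by $2(d-k)$ linear inequalities in the parameter space $\mathbb{R}^{2d}$ of boxes) converts this fractional bound into the integer bound $\tau \le C(p,d,k)$. The main obstacle is the inner Ramsey/recursion step: one must ensure that the extracted clique has size a \emph{constant} fraction of $|S'|$ (not just unbounded as $|S'| \to \infty$) and that the quantitative dependence remains a function of $p, d, k$ alone. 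This is where the small Helly number of axis-parallel boxes and their $\chi$-boundedness in each fixed dimension are essential, as they convert density information inside $G_{J^*}$ into a usable piercing bound via the known $(p,2)$-theorem at each level of the recursion.
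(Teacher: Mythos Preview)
Your argument has a genuine gap at the ``first-selection lemma'' step. You reduce to finding a $G_{J^*}$-clique of size $\beta|S'|$ from the information that $G_{J^*}$ has edge density at least $1/(m(p-1))$, where $G_{J^*}$ is the intersection graph of axis-parallel boxes in $\mathbb{R}^{d-k}$. But this is precisely a fractional Helly statement for boxes with uniformity $q=2$, and it is \emph{false} whenever $d-k\ge 2$: take $n$ pairwise-disjoint thin horizontal rectangles and $n$ pairwise-disjoint thin vertical rectangles in $\mathbb{R}^2$, each horizontal one crossing each vertical one; the intersection graph has density close to $1/2$ yet its largest clique has size~$2$. Your suggested workaround (``coloring each edge of $G_{J^*}$ by which sub-projection $J'\subsetneq J^*$ witnesses the intersection, combined with the $(p,2)$-theorem for boxes applied recursively'') does not repair this: an edge of $G_{J^*}$ is just an intersection of two boxes in $\mathbb{R}^{J^*}$ and carries no canonical sub-projection label, and invoking a $(p,2)$-theorem for boxes inside the argument is circular, since that is exactly what you are trying to prove (the reference to Alon--Kalai is also off --- their result concerns hyperplane transversals, not axis-parallel boxes). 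You correctly flag this step as ``the main obstacle,'' but the sketch does not overcome it; appealing to $\chi$-boundedness of box graphs does not help either, because $\chi$-boundedness gives a clique of size $\Omega(\chi)$, not of size $\Omega(n)$.

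The paper bypasses the Alon--Kleitman machinery entirely. It applies the Ding--Seymour--Winkler theorem $\tau(\mathcal H)\le f(\nu(\mathcal H),\lambda(\mathcal H))$ to the dual hypergraph $\mathcal H$ (vertices are axis-parallel $k$-flats, edges are the sets of $k$-flats meeting a given box). The hypothesis gives $\nu(\mathcal H)\le p-1$ immediately, so the entire content is a bound on $\lambda(\mathcal H)$: one must rule out arbitrarily large families of boxes in which every pair has a ``private'' axis-parallel $k$-flat meeting only those two. For $k=0$ this is done by an Erd\H{o}s--Szekeres argument on the $2d$ orderings of the box faces (three consistently ordered boxes force the middle one to contain the intersection of the outer two), and the case $k>0$ reduces to $k=0$ by Ramsey on the $\binom{d}{k}$ directions of the witnessing flats followed by projection to the orthogonal complement. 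No fractional Helly, first-selection, LP duality, or weak $\varepsilon$-nets are needed.
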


\begin{proof}[Proof of Theorem~\ref{thm:boxesFlats}]
    Assume that %$S \subset V(\mathcal{A}_{d,k})$ is such that
    $\mathcal{A}^{(2)}_{d,k}[S]$ has no infinitely large independent set. As $\mathcal{A}_{d,k}^{(2)}$ is $M_{\aleph_0}^{(2)}(t)$-free by Claim~\ref{cl:MfreeBoxes}, we can apply Theorem~\ref{thm:mainGeneralized} to conclude that there exists a finite $p$ such that $\mathcal{A}^{(2)}_{d,k}[S]$ has no independent sets of size $p$. But then $S$ can be covered with $C = C(d,k,p) < \infty$ edges by Claim~\ref{cl:pqforBoxes}.
\end{proof}

To prove the $M_{\aleph_0}^{(2)}(t)$-freeness of $\mathcal{A}^{(2)}_{d,k}$, we can use the following result of B{\'a}r{\'a}ny, Fodor, Mart{\'\i}nez-P{\'e}rez, Montejano,  Oliveros,  and P{\'o}r.

\begin{theorem}[B{\'a}r{\'a}ny et al.~\cite{barany2015fractional}]\label{thm:weakFHforBoxes}
    Let $S \subset V(\mathcal{A}_{d,0})$ and let $\alpha \in (1 - \frac{1}{d^2}, 1]$ be a real number. If $e(\mathcal{A}^{(2)}_{d,0}[S]) \geq \alpha\binom{|S|}{2}$, then there exists an edge of $\mathcal{A}_{d,0}[S]$ of size at least $(1 - d\sqrt{1-\alpha})|S|$.
\end{theorem}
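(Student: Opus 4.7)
The plan is to reduce the $d$-dimensional problem to one-dimensional intervals by projecting to each coordinate axis, prove the analogous statement for interval graphs by induction, and combine via inclusion--exclusion. Concretely, for each $k\in\{1,\dots,d\}$ I would let $G_k$ be the intersection graph of the projections of the $n=|S|$ boxes onto the $k$-th axis. Two axis-parallel boxes intersect iff all their coordinate projections intersect, so $E(\mathcal{A}^{(2)}_{d,0}[S]) = \bigcap_k E(G_k)$; in particular each $G_k$ has at most $(1-\alpha)\binom{n}{2}$ non-edges.

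The key lemma, which I would prove by induction on $n$, is that any interval graph $H$ on $n$ vertices with clique number $\omega$ satisfies $|\bar E(H)| \geq \binom{n-\omega+1}{2}$. Sort the intervals by right endpoint and let $I$ be the one with smallest right endpoint $b$. Every neighbour of $I$ has right endpoint $\ge b$ and, since it meets $I$, left endpoint $\le b$, so it contains $b$; hence $\{I\}\cup N(I)$ is a clique and $\deg(I) \le \omega - 1$. Removing $I$ drops the clique number by at most $1$: either it stays at $\omega$ and the induction hypothesis gives $\binom{n-\omega}{2}$ non-edges in $H-I$, which together with the $\ge n-\omega$ non-edges incident to $I$ yields $\binom{n-\omega+1}{2}$; or it drops to $\omega-1$ and the induction hypothesis applied to $H-I$ directly gives $\binom{n-\omega+1}{2}$.

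Applying the lemma to each $G_k$ and using $\binom{n-\omega+1}{2} \ge \tfrac12(n-\omega)^2$ yields $\omega(G_k) \ge (1-\sqrt{1-\alpha})n$. By Helly's theorem on the line, pick a point $p_k$ witnessing a maximum clique of $G_k$ and let $T_k$ be the set of boxes whose $k$-th projection contains $p_k$, so $|T_k|\ge(1-\sqrt{1-\alpha})n$. Inclusion--exclusion then gives $|T_1\cap\cdots\cap T_d| \ge n - \sum_k(n-|T_k|) \ge (1 - d\sqrt{1-\alpha})n$, and every box in this intersection contains the point $(p_1,\dots,p_d)$, providing the required edge of $\mathcal{A}_{d,0}[S]$.

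The main obstacle is the interval lemma with the right constant: a naive median-point argument (at the point $p$ where $|L(p)|=|R(p)|$) gives only $\omega \ge (1-\sqrt{2(1-\alpha)})n$, which via inclusion--exclusion would require the stronger hypothesis $\alpha > 1 - 1/(2d^2)$; saving the factor $\sqrt{2}$ to match the stated range $\alpha > 1 - 1/d^2$ is precisely what the induction above accomplishes, and it relies crucially on the interval-graph fact that the leftmost-right-endpoint interval's closed neighbourhood is always a clique.
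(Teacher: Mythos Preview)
The paper does not supply its own proof of this statement; it is quoted as a result from \cite{barany2015fractional} and used as a black box in the proof of Claim~\ref{cl:MfreeBoxes}. Your argument is correct and is essentially the standard one: project to each coordinate axis, show that an interval graph on $n$ vertices with clique number $\omega$ has at least $\binom{n-\omega+1}{2}$ non-edges (your induction via the leftmost right endpoint is exactly the right device, since the closed neighbourhood of that interval is a clique), deduce $\omega(G_k)\ge(1-\sqrt{1-\alpha})n$ for each coordinate graph $G_k$, and then intersect the $d$ witnessing point-sets via a union bound to obtain a common point in at least $(1-d\sqrt{1-\alpha})n$ boxes. The arithmetic is sound: from $\tfrac12(n-\omega)^2\le\binom{n-\omega+1}{2}\le(1-\alpha)\binom{n}{2}<\tfrac12(1-\alpha)n^2$ one gets $n-\omega<\sqrt{1-\alpha}\,n$, and the final paragraph correctly identifies why the sharper interval lemma (rather than a median-point count) is needed to match the stated range $\alpha>1-1/d^2$.
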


\begin{proof}[Proof of Claim~\ref{cl:MfreeBoxes}]
    First, let $k=0$ and let $\mathcal{F}_1, \ldots, \mathcal{F}_s$ families of boxes each of size $t$ such that every two boxes from different families intersect. Then $\mathcal{F}_1\cup \ldots\cup \mathcal{F}_s$ is a family of $st$ boxes such that an $\alpha = \frac{\binom{s}{2}t^2}{\binom{st}{2}} = \frac{s-1}{s-\frac{1}{t}}$-fraction of the pairs intersect. As $\frac{s-1}{s-\frac{1}{t}} > \frac{s-1}{s} = 1 - \frac{1}{s} > 1 - \frac{1}{d^2}$ if $s$ is large enough, we can apply Theorem~\ref{thm:weakFHforBoxes} to conclude that there exists a large clique in the intersection graph of boxes. More precisely, we get a clique of size at least $(1-d\sqrt{1-\frac{s-1}{s-\frac{1}{t}}})st >(1-d\sqrt{1-(1 -\frac{1}{s})})st = (1-\frac{d}{\sqrt s})st > s$ if $s$ is large enough and $t\ge 2$. This clique must contain two intersecting boxes from the same family.

    We can reduce the $k > 0$ case to the $k = 0$ case with a Ramsey-type argument. Let $\mathcal{F}_1, \ldots, \mathcal{F}_s$ be families of boxes, each of size $t$, such that every two boxes from different families can be hit by an axis-parallel $k$-flat. %$k$-dimensional affine subspace.
    With a combination of bipartite and classical Ramsey theorems, for some $s'$ and $t'$ we can find $s'$ families of boxes, each of size at least $t'$, and an axis-parallel $k$-flat $W$ such that each pair of boxes from different families can be hit with a translate of $W$. We can have $s'$ and $t'$ still large enough if we had chosen $s$ and $t$ large enough. Project orthogonally to the orthogonal complement of $W$ and apply the $k=0$ case to find two boxes from the same family whose projections intersect. The original two boxes can be hit with a translate of $W$.
\end{proof}

A suitable analog of the Alon-Kleitman theorem is  true for $\mathcal{A}_{d,k}$. For that we first need the following result of Ding, Seymour and Winkler. For a hypergrpah $\mathcal{H}$ let $\lambda(\mathcal{H})$ be the maximum number $s$ such that there exist edges $E_1, \ldots, E_s$ of $\mathcal{H}$ with $(E_i \cap E_j ) \setminus \cup_{\ell \neq i,j}E_\ell\neq \emptyset$ for all $i, j \in \{1, \ldots, s\}$.

\begin{theorem}[Ding, Seymour and Winkler \cite{ding94}]\label{thm:pairedVC}
    There exists a function $f$ such that for any hypergraph $\mathcal{H}$ we have $\tau(\mathcal{H}) \leq f(\nu(\mathcal{H}), \lambda(\mathcal{H}))$.
\end{theorem}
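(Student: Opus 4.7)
The plan is to proceed by induction on the matching number $\nu(\mathcal{H})$. The base case $\nu = 1$ is the heart of the matter: when $\mathcal{H}$ is pairwise intersecting, the claim reduces to $\tau(\mathcal{H}) \leq g(\lambda)$ for some function $g$ depending only on $\lambda(\mathcal{H})$. The inductive step will then follow by decomposing $\mathcal{H}$ along a maximum matching.

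For the base case, I would build a maximal ``$\lambda$-configuration,'' i.e.\ a maximal sequence $E_1, \ldots, E_s \in \mathcal{H}$ together with vertices $v_{ij} \in (E_i \cap E_j) \setminus \bigcup_{k \leq s,\, k \neq i, j} E_k$ for every pair $i < j$. By definition $s \leq \lambda(\mathcal{H})$. The key structural consequence of maximality is that, for every remaining edge $E \in \mathcal{H}$, no choice of $E_{s+1} = E$ with fresh private vertices is possible, which either forces some existing $v_{ij}$ to lie in $E$ (directly yielding a transversal vertex), or forces some index $j \leq s$ with $E \cap E_j \subseteq \bigcup_{k \leq s,\, k \neq j} E_k$. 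Careful bookkeeping should let us read off a transversal of $\mathcal{H}$ built from vertices inside $E_1 \cup \cdots \cup E_s$ of size bounded by a function of $s$, hence of $\lambda$.

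For the induction step, fix a maximum matching $F_1, \ldots, F_\nu$ in $\mathcal{H}$. Every edge of $\mathcal{H}$ meets some $F_i$. For each vertex $v \in F_1 \cup \cdots \cup F_\nu$, the subfamily $\mathcal{H}_v := \{E \in \mathcal{H} : v \in E\}$ is pairwise intersecting and satisfies $\lambda(\mathcal{H}_v) \leq \lambda$, so the base case gives $\tau(\mathcal{H}_v) \leq g(\lambda)$. Greedily piercing edges through well-chosen representatives of the $F_i$'s and recursing on the residual hypergraph (which has smaller $\nu$) should produce the desired bound $\tau(\mathcal{H}) \leq f(\nu, \lambda)$.

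The main obstacle I foresee is formalizing the base case --- specifically, turning the maximality of the $\lambda$-configuration into a genuine transversal rather than merely a structural constraint. One must track carefully how remaining edges ``miss'' the private vertices, and ensure that the pair-private vertices $v_{ij}$, together with suitable additional vertices from the $E_i$'s, actually form a hitting set. A secondary Ramsey-type or double-counting argument may be needed to rule out pathological edges that evade this construction. The resulting bound $f(\nu, \lambda)$ is likely far from tight, but any finite bound suffices for the application to Claim~\ref{cl:pqforBoxes}.
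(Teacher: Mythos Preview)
The paper does not prove this theorem; it is quoted from Ding, Seymour and Winkler \cite{ding94} and used as a black box in the proof of Claim~\ref{cl:pqforBoxes}. So there is no ``paper's own proof'' to compare your proposal against.

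As for the proposal itself, there are two genuine gaps beyond the one you already flag. First, in the inductive step you cover $\mathcal{H}$ by the stars $\mathcal{H}_v$ for $v \in F_1 \cup \cdots \cup F_\nu$, but the edges $F_i$ of the matching may be infinite sets, so this is an infinite union of subfamilies and summing the bounds $\tau(\mathcal{H}_v) \le g(\lambda)$ gives nothing. The Ding--Seymour--Winkler theorem is stated for arbitrary hypergraphs, with no finiteness assumption on edge sizes, so you cannot assume the $F_i$ are finite. Second, in the base case, when maximality of $E_1,\ldots,E_s$ fails for a new edge $E$ because $(E \cap E_j) \subseteq \bigcup_{k \le s,\, k \neq j} E_k$ for some $j$, this only tells you that $E$ meets the (possibly infinite) set $E_j \cap \bigcup_{k \neq j} E_k$; it does not single out a bounded set of candidate transversal vertices. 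Your ``careful bookkeeping'' would have to extract a finite transversal from inside $E_1 \cup \cdots \cup E_s$, but nothing in the setup bounds the relevant part of this union. The actual argument in \cite{ding94} is more delicate and does not proceed along these lines.
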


\begin{proof}[Proof of Claim~\ref{cl:pqforBoxes}]
    Let us define the dual hypergraph $\mathcal{H}$ of $\mathcal{A}_{d,k}[S]$ where the vertices are axis-parallel $k$-flats and edges correspond to families of $k$-flats which intersect a given member of $S$. As $\mathcal{A}_{d,k}^{(2)}[S]$ does not contain independent sets of size $p$, we have $\nu(\mathcal{H}) \leq p$. Proving $\tau(\mathcal{H}) \leq C$ would mean $S$ can be covered with $C$ edges of $\mathcal{A}_{d,k}$. Due to Theorem~\ref{thm:pairedVC} it remains to prove the boundedness of $\lambda(\mathcal{H})$. For a contradiction, assume that for arbitrarily large $s$ there exists a family $\mathcal{F} = \{B_1, \ldots, B_s\}$ of axis-parallel boxes such that for every $i,j \in \{1, \ldots, s\}$ there exists an axis-parallel $k$-flat intersecting only $B_i$ and $B_j$ and no other member of $\mathcal{F}$.

    If $k=0$, then $\mathcal{F}$ is a family of $s$ boxes such that any two of them have a private intersection point not contained in any other box. In this case, $s$ cannot be too large, because otherwise we would have three boxes such that one contains the intersection of the two others as follows.
    
    As every box $B_i$ is the intersection of $2d$ halfspaces, we can write it as $B_i=\cap_{j=1}^d (H_i^{j+} \cap H_i^{j-})$ where $H_i^{j+}$ is a halfspace that contains all the points whose $j$-th coordinate is larger than some $x_i^{j+}$ and, similarly, $H_i^{j-}$ is a halfspace that contains all the points whose $j$-th coordinate is smaller than some $x_i^{j-}$.
    For any two boxes $i$ and $i'$, and any $j$, we can compare $x_i^{j+}$ with $x_{i'}^{j+}$, or $x_i^{j-}$ with $x_{i'}^{j-}$.
    This gives $2d$ different orderings on the boxes.
    
    We say that a triple of boxes are ordered consistently if their orders are the same for all the $2d$ orders. Via a repeated application of the Erd{\H{o}}s-Szekeres lemma on monotone subsequences \cite{erdos1935combinatorial}, if $s$ is large enough, there exists $3$ boxes which are ordered consistently. But then the intersection of the first box and the last box is contained in the middle box, which shows that they cannot have a private intersection point.
    
    Now we reduce the $k > 0$ case to the $k=0$ case. By applying Ramsey's theorem with $\binom dk$ colors, if $s$ is large enough, there exists an axis-parallel $k$-flat $W$ and subfamily $\mathcal{F}' \subseteq \mathcal{F}$ of (still large enough) size $s'$ such that for any two boxes $B_i, B_j \in \mathcal{F}'$ there exists a translate of $W$ intersecting only $B_i$ and $B_j$ and no other member of $\mathcal{F}'$. Project every member of $\mathcal{F}'$ to the orthogonal complement of $W$. As $s'$ is still large enough, we can use the $k=0$ case to find two boxes of $\mathcal{F}'$ whose projection does not have a private point. This leads to a contradiction, because then no translate of $W$ can hit only the two original boxes from $\mathcal{F}'$.
\end{proof}

\section*{Acknowledgement}

We would like to thank Andreas Holmsen for useful discussions.
We would also like to thank Chaya Keller for warning us that in the first version of this paper, we have stated the infinite heterochromatic results incorrectly, and for providing counterexamples for the incorrectly phrased statements.

\bibliographystyle{plain}
\bibliography{biblio}

\end{document}